\newcommand{\BB}{{\mathcal B}}
\newcommand{\CC}{{\mathcal C}}
\newcommand{\DD}{{\mathcal D}}
\newcommand{\FF}{{\mathcal F}}
\newcommand{\LL}{{\mathcal L}}
\newcommand{\TT}{{\mathcal{T}}}
\newcommand{\BR}{{\mathbb R}}
\newcommand{\fch}{{\mathbf{1}}}
\newtheorem{theorem}{\bf Theorem}[section]
\newtheorem{proposition}[theorem]{\bf Proposition}
\newtheorem{lemma}[theorem]{\bf Lemma}
\newtheorem{corollary}[theorem]{\bf Corollary}
\theoremstyle{definition}
\newtheorem{definition}{\bf Definition}[section]
\newtheorem{example}[theorem]{\bf Example}
\newtheorem*{fakt}{Fakt}
\theoremstyle{remark}
\newtheorem{remark}{\bf Remark}[section]
\begin{document}

\title {On perpetual American options in a multidimensional Black-Scholes
model}
\author {Andrzej Rozkosz
\mbox{}\\[2mm]
{\small  Faculty of Mathematics and Computer Science,
Nicolaus Copernicus University}\\
{\small Chopina 12/18, 87-100 Toru\'n, Poland} \\
{\small E-mail address: rozkosz@mat.umk.pl}}
\date{}
\maketitle

\begin{abstract}
We consider the problem of pricing perpetual American options written on dividend-paying assets whose price dynamics follow a multidimensional Black and Scholes model. For convex  Lipschitz continuous  reward functions, we give a probabilistic  characterization of the fair price in terms of a reflected BSDE, and an analytical one  in terms of an obstacle problem. We also provide the  early exercise premium formula.
\end{abstract}
\noindent{\small{\bf Keywords:} Perpetual American option, backward stochastic differential equation, obstacle problem}.
\medskip\\
{\small{\bf 2010 Mathematics Subject Classifications:}. Primary
91G20, Secondary 60H10, 60H30.}


\section{Introduction}

\begin{fakt}
 to jest
\end{fakt}

In this paper, we consider the problem of pricing perpetual American options written on dividend-paying assets whose price dynamics follow the classical multidimensional Black and Scholes model. In this model, under the risk-neutral measure $P$, the asset prices $X^{s,x,1},\dots,X^{s,x,d}$  on $[s,\infty)$ evolve according to the stochastic differential equation
\begin{equation}
\label{eq1.1}
X^{s,x,i}_{t}=x_i+\int_{s}^{t}(r-\delta_{i})X^{s,x,i}_{\theta}\,d\theta
+\sum_{j=1}^n\int_{s}^{t}\sigma_{ij}X^{s,x,i}_{\theta}\,dW^j_{\theta},
\quad t\ge s.
\end{equation}
In (\ref{eq1.1}), $W$ is a standard  $d$-dimensional Wiener process, $x_i>0$, $i=1,\dots,d$, are the initial prices at time $s$, $r>0$ is the risk-free interest rate, $\delta_i\ge0$, $i=1,\dots,d$, are dividend rates and $\sigma=\{\sigma_{ij}\}_{i,j=1,\dots,d}$ is the volatility matrix. We assume that $a=\sigma\cdot\sigma^*$, where $\sigma^*$  is the transpose of $\sigma$, is strictly   positive definite.

Let $T>0$ and  $\psi:\BR^d\rightarrow\BR$ be a nonnegative continuous function satisfying the linear growth condition.  Under the measure $P$, the value at time $s$ of the American option with payoff function $\psi$ and expiration time $T>0$ is given by
\begin{equation}
\label{eq1.2}
V_T(s,x)=\sup_{s\le\tau\le T}Ee^{-r(\tau-s)}\psi(X^{s,x}_{\tau}),
\end{equation}
and the value of the perpetual option with payoff function $\psi$ is
\begin{equation}
\label{eq1.3}
V(s,x)=\sup_{\tau\ge s}Ee^{-r(\tau-s)}\psi(X^{s,x}_{\tau})
\end{equation}
(see \cite{Ka,KS,S}). In (\ref{eq1.2}), the supremum  is taken
over the set of all stopping times with values in $[s,T]$, and in
(\ref{eq1.3}), over the set of stopping times in $[s,\infty]$. In
the event that $\tau=\infty$, we interpret
$e^{-r(\tau-s)}\psi(X^{s,x}_{\tau})
=\varlimsup_{t\rightarrow\infty}e^{-r(t-s)}\psi(X_t)$.

Nowadays, properties of $V_T$ are quite well investigated. It is
known (see  \cite{EKPPQ,EPQ,EQ}) that $V_T$ can be represented by
a solution  of a reflected backward stochastic differential
equation (RBSDE).  A detailed study of the structure of this
RBSDE, which in particular leads to the  early exercise premium
formula,  is given in \cite{KR:AMO} (also see Section
\ref{sec3.1}). The value $V_T$ can also be characterized
analytically as a solution of some obstacle problem (or, in
different terminology, variational inequality) (see
\cite{EKPPQ,EPQ,EQ,KR:AMO} and Section \ref{sec3.2}). It is worth
noting here that the analytical characterization relies heavily on
the characterization via solutions of RBSDEs.

In the case of perpetual options less in known, except for put  and
call options in case $d=1$, which were thoroughly investigated as
early as in \cite{McK,M}. For a nice presentation of these results
as well as some newer results and historical comments we refer the
reader to the books \cite{KS,S}. Presumably, the main reason that
less attention has been paid to $V$ than to $V_T$ is that
perpetual options are not traded. On the other hand, in our
opinion, perpetual American options are interesting for
historical reasons and from a purely theoretical point of view.
This motivated us to ask whether in the multidimensional case
one can represent $V$ in terms
of BSDEs or solutions of  obstacle problems. The answer is ``yes''
and the desired  representations of $V$ can
be derived in a quite elegant way from those of $V_T$. The main
idea is as follows. Intuitively, $V$ is the limit of $V_T$ as
$T\rightarrow\infty$
This suggests that properties of $V$ we are
interested in can be derived by studying the behaviour,  as
$T\rightarrow\infty$,  of the solution of the RBSDE with terminal
condition at time $T$, which is used to represent $V_T$. By
modifying some results from the recent paper \cite{KR:PA}, we show
that the  idea sketched above is  realizable. As a result we
show that for convex and Lipschitz continuous $\psi$ the value
function $V$ is represented  by a solution of some RBSDE with
terminal condition 0 at infinity and we get the exercise premium
formula. We also show that $V$ is a unique solution of some
obstacle problem. Finally, we estimate that rate of convergence of
$V_T $ to $V$. It seems that some of our results (the
representation in terms of RBSDEs, rate of convergence) are new
even in the case of the classical call/put option and $d=1$.

\section{Preliminaries}

Let $\Omega=C([0,T];\BR^d)$ and  $X$ be the canonical process on
$\Omega$. For $(s,x)\in[0,T]\times\BR^d$ let $P_{s,x}$  denote the
law of the process $X^{s,x}=(X^{s,x,1},\dots,X^{s,x,d})$ defined
by (\ref{eq1.1}) and  $\{\FF^s_t\}$  denote the completion of
$\sigma(X_{\theta};\theta\in[s,t])$ with respect to the family
$\{P_{s,\mu};\mu$ a finite measure on $\BB(\BR^n)\}$, where
$P_{s,\mu}(\cdot)=\int_{\BR^d}P_{s,x}(\cdot)\,\mu(dx)$. Let
$a=\sigma\cdot\sigma^*$. Using It\^o's formula and L\'evy's
characterization of the Wiener process  one can check (see
\cite[Section 2]{KR:AMO} for details) that
\begin{equation}
\label{eq2.1}
X^i_t=x_i+\int_{s}^{t}(r-\delta_{i})X^{i}_{\theta}\,d\theta
+\sum_{j=1}^d\int_{s}^{t}\sigma_{ij}X^{i}_{\theta}\,dB^j_{s,\theta},
\quad t\ge s,\quad P_{s,x}\mbox{-a.s.},
\end{equation}
where,  under the measure $P_{s,x}$, $\{B_{s,t}, t\ge s\}$ is a standard
$d$-dimensional $\{\FF^s_t\}$-Wiener process on $[s,\infty)$. It
is well known that  the unique solution of (\ref{eq2.1}) is of the
form
\begin{equation}
\label{eq2.2} X^{i}_t=x_i\exp\big((r-\delta_i-a_{ii}/2)(t-s)
+\sum^d_{j=1}\sigma_{ij}B^j_{s,t}\big),\quad t\ge s,\quad
P_{s,x}\mbox{-a.s.}
\end{equation}
Since  $\tilde B^i:=\sum^d_{j=1}\sigma_{ij}B^j_{s,\cdot}$ is a
continuous martingale with the quadratic variation $\langle \tilde
B^i_{s,\cdot}\rangle_t=a_{ii}(t-s)$, $t\ge s$, the process $X^i$
can be written as
\begin{equation}
\label{eq2.3}
X^i_t=x_ie^{(r-\delta_i)(t-s)}N^i_{s,t}, \quad t\ge s,
\end{equation}
where
\[
N^i_{s,t}=\exp(-(t-s)a_{ii}/2+\tilde B^i_{s,t})=\exp(-\langle\tilde B^i_{s,\cdot}\rangle/2+\tilde B^i_{s,\cdot}),\quad t\ge s,
\]
is an $(\FF^s_t)$-martingale under $P_{s,x}$. Let
$D=\{x=(x_1,\dots,x_d):x_i>0,i=1,\dots,d\}$. From (\ref{eq2.2}) it
follows that if $x\in D$, then $P_{s,x}(X_t\in D,t\ge s)=1$.

Below  we recall some known results on the pricing of American options with finite expiration time $T>0$. They will be needed in the next section.

In this paper, we assume that the payoff function  satisfies the following condition:
\begin{enumerate}
\item[(A1)]
$\psi:\BR^d\rightarrow\BR$ is a nonnegative convex function which is Lipschitz continuous, i.e. there is $L>0$ such that $|\psi(x)-\psi(y)|\le L|x-y|$ for all $x,y\in\BR^d$.
\end{enumerate}
In particular,
\begin{equation}
\label{eq2.4}
\psi(x)\le C(1+|x|),\quad x\in \mathbb{R}^d,
\end{equation}
with $C=\max\{L,\psi(0)\}$. Furthermore, since $\psi$ is convex,
for a.e. $x\in\BR^d$ there exist the usual partial derivatives
$\nabla_1\psi(x), \dots,\nabla_d\psi(x)$ of $\psi$ at $x$.
Furthermore, by Alexandrov's theorem (see, e.g., \cite[Theorem
7.10]{AA}), there is a set $N\subset\BR^d$ of Lebesgue measure zero such that
$\psi$ has  second order derivatives at $x$ for every
$x\in\BR^d\setminus N$. We denote them by  $\nabla^2_{ij}\psi(x)$.

Let $\TT_{s,T}$ denote the set  of all $(\FF^s_t)$-stopping times with values in $[s,T]$. The  fair price (or value) $V_T(s,x)$ of the American option with  expiration time $T$ and payoff function $\psi$ is given by
\begin{equation}
\label{eq5.1}
V_T(s,x)=\sup_{\tau\in\TT_{s,T}}E_{s,x}e^{-r(\tau-s)}\psi(X_{\tau}).
\end{equation}

Let $L=\psi(X)$. Note that $E_x|N^i_{s,T}|^2=e^{a_{ii}(T-s)}$,  so
by (\ref{eq2.3}) and Doob's inequality, $E_{s,x}\sup_{s\le t\le
T}|X^i_t|^2<\infty$, $i=1,\dots,d$. By this and (\ref{eq2.4}),
$E_{s,x}\sup_{s\le t\le T}|L_t|^2<\infty$. Therefore, by
\cite[Theorem 5.2]{EKPPQ},   for every $(s,x)\in[0,T]\times\BR^d$
there exists a unique solution $(Y^{T,s,x},K^{T,s,x},Z^{T,s,x})$,
on the space $(\Omega,\FF^s_T,P_{s,x})$, of the RBSDE  with
coefficient $f(y)=-ry$, $y\in\BR$, terminal condition $\psi(X_T)$
and barrier $L$, that is linear RBSDE of the form
\begin{equation}
\label{eq5.2}
\left\{
\begin{array}{l}
Y^{T,s,x}_t=\psi(X_T)-\int^T_trY^{T,s,x}_{\theta}\,d\theta
+\int^T_tdK^{T,s,x}_{\theta}
-\int^T_tZ^{T,s,x}_{\theta}\,dB_{s,\theta},\,\,
t\in[s,T],\medskip\\
Y^{T,s,x}_t\ge \psi(X_t),\quad t\in[s,T],\medskip \\
K^{T,s,x}_0=0\,\, ,K^{T,s,x}\mbox{ is continuous and  increasing, and satisfies }\\
\qquad\qquad\mbox{the minimality condition }
\int^T_s(Y^{T,s,x}_t-\psi(X_t))\,dK^{T,s,x}_t=0.
\end{array}
\right.
\end{equation}
For a precise definition of a solution we refer the reader to
\cite{EKPPQ}. Here let us only note that
$E_{s,x}\int^T_s|Z^{T,s,x}_{\theta}|^2\,d\theta<\infty$, so the
process
\[
M^{T,s,x}_t=\int^t_sZ^{T,s,x}_{\theta}\,dB_{s,\theta},\quad t\in[s,T],
\]
is a martingale under $P_{s,x}$.
Let $L_{BS}$ denote the Black-Scholes operator defined by
\[
L_{BS}=\sum^d_{i=1}(r-\delta_i)x_i\partial_{x_i}
+\frac12\sum^d_{i,j=1}a_{ij}x_ix_j\partial^2_{x_ix_j}\,,
\]
where $\partial_{x_i},\partial^2_{x_ix_j}$ denote the partial derivatives in the distribution sense. In \cite[Theorem 8.5]{EKPPQ} it is also proved that for every $(s,x)\in [0,T]\times\BR^d$,
\begin{equation}
\label{eq2.8}
Y^{T,s,x}_t=u_T(t,X_t),\quad t\in[s,T],\quad
P_{s,x}\mbox{-a.s.},
\end{equation}
where $u_T$ is the unique viscosity solution to the obstacle problem
\begin{equation}
\label{eq5.3} \left\{
\begin{array}{ll}
\min\{u_T-\psi,-\partial_su_T-L_{BS}u_T+r u_T\}=0 &\mbox{in }[0,T]\times\BR^d,
\medskip\\
u_T(T,\cdot)=\psi & \mbox{on } x\in\BR^d.
\end{array}
\right.
\end{equation}
The process $\bar Y^{T,s,x}$ defined as $\bar Y^{T,s,x}_t=e^{-r(t-s)}Y^{T,s,x}_t$, $t\in[s,T]$, is the first component of the solution of RBSDE with coefficient $f=0$, terminal condition $e^{-rT}\psi(X_T)$ and barrier $e^{-rt}\psi(X_t)$, $t\in[s,T]$.
Therefore  from  (\ref{eq2.8}) with $t=s$ and \cite[Proposition 2.3]{EKPPQ} (or \cite[Proposition 3.3]{EPQ}) it follows that  $V_T=u_T$.
Let
\[
\LL_{BS}=\sum^d_{i=1}(r-\delta_{i})x_{i}\nabla_i +\frac12\sum^d_{i,j=1}
a_{ij}x_ix_j\nabla_{ij}\,.
\]
In  \cite[Theorem 2]{KR:AMO} it is proved that under (A1), for every $(s,x)\in[0,T]\times D$,
\begin{equation}
\label{eq2.10}
K^{T,s,x}_t=\int^t_s\Phi(X_{\theta},u_T(\theta,X_{\theta}))\,d\theta,\quad t\in[s,T],\quad P_{s,x}\mbox{-a.s.},
\end{equation}
where
\begin{equation}
\label{eq2.9}
\Phi(x,y)=\Psi^{-}(x)\fch_{(-\infty,\psi(x)]}(y),\qquad \Psi^-=\max\{-\Psi,0\}
\end{equation}
and
\begin{equation}
\label{eq2.11}
\Psi(x)=-r\psi(x)+\LL_{BS}\psi(x)\quad \mbox{if }x\in D\setminus N,\qquad
\Psi(x)=0\quad\mbox{if }x\in N.
\end{equation}
Note that from (\ref{eq2.2}) it follows that if $(s,x)\in[0,\infty)\times D$ and $t\in(s,T]$, then under the measure $P_{s,x}$ the random variable $X_t$ has density with respect to the Lebesgue measure. Therefore $K^{T,s,x}$ is independent of $N$ and  the way we define $\Psi$ on $N$. Note also that
\[
\Phi(x,0)=\Psi^{-}(x),\qquad\Phi(x,u_T(s,x))=\Psi^{-}(x)\fch_{\{u_T(s,x)=\psi(x)\}}, \quad (s,x)\in[0,T]\times D,
\]
since $u_T(s,x)\ge\psi(x)\ge0$,

\section{Perpetual options}
\label{sec3}

To shorten notation, in this section we set $V(x)=V(0,x)$,  $\FF_t=\FF^0_t$,
$P_x=P_{0,x}$, and we denote by $E_x$ the expectation with respect to $P_x$. With this notation (\ref{eq1.3}) takes the form
\begin{equation}
\label{eq6.7}
V(x)=\sup_{\tau\in\TT}E_xe^{-r\tau}\psi(X_{\tau}),
\end{equation}
where $\TT$ is the set of all $(\FF_t)$-stopping times. In
the event that $\tau=\infty$, we interpret
$e^{-r(\tau-s)}\psi(X_{\tau})
=\varlimsup_{t\rightarrow\infty}e^{-r(t-s)}\psi(X_t)$.

\subsection{Stochastic representation of the value function}
\label{sec3.1}

Assume (A1) and let
\[
Y^T_t=u_T(t,X_t),\qquad K^{T}_t=\int^t_0\Phi(X_{s},u_T(s,X_{s}))\,ds,\quad t\in[0,T].
\]
By (\ref{eq2.8}) and (\ref{eq2.10}),  $Y^T$ and $K^T$ are
independent of $x$ versions of $Y^{T,0,x}$ and $K^{T,0,x}$, respectively.
Since $V_T=u_T$, we have
\begin{equation}
\label{eq5.9}
V_T(t,X_t)=Y^{T}_t=u_T(t,X_t),\quad t\in[0,T], \quad
P_{x}\mbox{-a.s.}
\end{equation}
By the first equation in (\ref{eq5.2}) we have
\[
M^{T,0,x}_t=Y^{T,0,x}_t-Y^{T,0,x}_0-\int^t_0rY^{T,0,x}_s\,ds+K^{T,0,x}_t,\quad t\ge0,
\]
so $M^{T,0,x}$ also has a version independent of $x$, which we denote by $M^T$.
Set
\[
\bar Y^{T}_t=e^{-rt}Y^T_t,\qquad \bar K^{T}_t=\int^t_0e^{-rs}\,dK^T_s,\qquad \bar M^T_t=\int^t_0e^{-rs}\,dM^T_s,\quad t\in[0,T].
\]
Since
\[
Y^{T}_t=\psi(X_T)-\int^T_trY^{T}_{s}\,ds
+\int^T_tdK^{T}_s-\int^T_tdM^T_s,\quad t\in[s,T],
\]
integrating by parts we obtain
\begin{equation}
\label{eq6.4}
\bar Y^T_t=e^{-rT}\psi(X_T)+\int^T_td\bar K^T_s
-\int^T_td\bar M^T_s,\quad t\in[0,T].
\end{equation}

We will also need the following condition.
\begin{enumerate}
\item[(A2)]For every $x\in D$,
\begin{equation}
\label{eq3.13}
\mbox{\rm(a)}\,\,\lim_{t\rightarrow\infty}E_xe^{-rt}\psi(X_t)=0,\qquad \mbox{\rm(b)}\,\,E_x\int^{\infty}_0e^{-rt}\Psi^{-}(X_t)\,dt<\infty.
\end{equation}
\end{enumerate}

\begin{remark}
\label{rem3.1}
(i) Condition (\ref{eq3.13}) can be equivalently stated as
\[
\mbox{\rm(a)}\,\,\lim_{t\rightarrow\infty}e^{-rt}P_t\psi(x)=0,\qquad \mbox{\rm(b)}\,\,R_r\Psi^{-}(x)<\infty,
\]
where $(P_t)_{t>0}$ (resp. $(R_{\alpha})_{\alpha>0})$ is the semigroup (resp. resolvent) associated with $X$.
\smallskip\\
(ii) Assume that $r>0$. Clearly (\ref{eq3.13})(a) is satisfied for
all $x\in D$  if $\psi$ is bounded. By (\ref{eq2.3}),
$E_xX^{i}_t=x_ie^{(r-\delta_i)t}$, $t\ge0$. Therefore
(\ref{eq3.13})(a) is satisfied, for all $x\in D$, for general
Lipschitz continuous $\psi$ if $\delta_i>0$, $i=1,\dots, d$.
Similarly, (\ref{eq3.13})(b) is satisfied, for all $x\in D$,  if
$\Psi^{-}$ is bounded or $\delta_i>0$, $i=1,\dots, d$, and there
is $c>0$ such that
\begin{equation}
\label{eq3.42} \Psi^{-}(x)\le c(1+|x|),\quad x\in\BR^d.
\end{equation}
\end{remark}

We are going to show that if (\ref{eq3.13}) is satisfied for some $x\in D$, then $\bar Y^T$ converges as $T\rightarrow\infty$ to a process $\bar Y^x$ being the first component of the solution $(\bar Y^x,\bar K^x,\bar M^x)$ of the reflected BSDE which informally can be
written as
\begin{equation}
\label{eq6.3}
\bar Y^x_t=\int^{\infty}_td\bar K^x_s
-\int^{\infty}_td\bar M^x_s,\quad t\ge0.
\end{equation}
We will also show that $\bar K^x$ has the representation
\begin{equation}
\label{eq6.6}
\bar K^x_t=\int^t_0e^{-rs}\Phi(X_s,e^{rs}\bar Y^x_s)\,ds,\quad t\ge0,
\end{equation}
so in fact $(\bar Y^x,\bar M^x)$ is a solution of the usual BSDE
\begin{equation}
\label{eq3.5}
\bar Y^x_t=\int^{\infty}_te^{-rs}\Phi(X_s,e^{rs}\bar Y^x_s)\,ds
-\int^{\infty}_td\bar M^x_s,\quad t\ge0.
\end{equation}
Equation  (\ref{eq6.3}) is a very special case of nonlinear
reflected BSDEs treated in \cite{HLW}. For existence and
uniqueness results for general infinite horizon BSDEs with
$L^2$-data we refer the reader to \cite{DP} (equations in $\BR^d$)
and \cite{FT} (equations in Hilbert spaces). Roughly speaking, in
\cite{HLW} it is proved that if the coefficient $f$ of the
equation satisfies a generalized Lipschitz condition, its terminal
condition  is square-integrable and its  barrier $\bar L$ is
continuous and satisfies the condition $E_x\sup_{t\ge0}|\bar
L_t|^2<\infty$, then the equation has a unique square-integrable
solution. In (\ref{eq6.3}), the coefficient  $f$ is equal to zero,
terminal condition is equal to zero and the barrier has the form
$\bar L_t=e^{-rt}\psi(X_t)$, $t\ge0$. In general, under (A1) and
(A2), this barrier does not satisfy the aforementioned assumption
from \cite{HLW}, so the results of \cite{HLW} are not directly
applicable to our situation. Assumption (A2)(a) says that $\bar
L_t=e^{-rt}\psi(X_t)$, $t\ge0$, has the property that
$\lim_{t\rightarrow\infty}E_x\bar L_t=0$, $x\in D$. We  shall see
that this condition on $\bar L$ together with (A2)(b) guarantee
the existence of a unique solution of (\ref{eq6.3}) such that its
first component $\bar Y^x$ is of Doob's class (D),  i.e. it has
(in general) weaker integrability properties than the solution
considered in \cite{HLW}.

Before giving the definition of  solutions of (\ref{eq6.3})  and
(\ref{eq3.5}) let us recall that a continuous  $(\FF_t)$-adapted
process $Y$ is said to be of class  (D) under the measure $P_x$
if the collection $\{Y_{\tau}:\tau\in\TT,\tau\mbox{ finite-valued}\}$ is uniformly
integrable under $P_x$. Let $\LL^1(P_x)$ denote the space of
continuous  processes with finite norm
$\|Y\|_{x,1}=\sup\{E_x|Y_{\tau}|:\tau\in\TT, \tau\mbox{ finite-valued}\}$. It is known  that
$\LL^1(P_x)$ is complete (see \cite[Theorem VI.22]{DM}). Moreover,
if $Y^n$ are of class (D) and $Y^n\rightarrow Y$ in $\LL^1(P_x)$,
then $Y$ is of class (D) (see \cite[Section 3]{KR:PA}).

\begin{definition}
(i) We say that a triple $(\bar Y^x,\bar K^x,\bar M^x)$ of adapted continuous processes is a
solution of the reflected BSDE (\ref{eq6.3}) with lower barrier $\bar L_t=e^{-rt}\psi(X_t)$
if $\bar Y^x$ is of class (D), $\bar M^x$ is a local martingale
with $\bar M^x_0=0$, $\bar K^x$ is an increasing process with $\bar K_0=0$, and for every $T>0$,
\begin{equation}
\label{eq6.1}
\left\{
\begin{array}{l}
\bar Y^x_t=\bar Y^x_T+\int^T_td\bar K^x_s
-\int^{T}_td\bar M^x_s,\quad t\ge0,\medskip\\
\bar Y^{x}_t\ge\bar L_t,\quad t\in[0,T],\quad
\int^{T}_0(\bar Y^{x}_t-\bar L_t)\,d\bar K^{x}_t=0, \medskip\\
\bar Y^x_T\rightarrow0\,\,\,P_x \mbox{-a.s. as }T\rightarrow\infty.
\end{array}
\right.
\end{equation}
(ii) We say that a pair  $(\bar Y^x,\bar M^x)$ of adapted continuous processes is a
solution of  the BSDE (\ref{eq3.5}) if $\bar Y^x$ is of class (D), $\bar M^x$ is a local martingale
with $\bar M^x_0=0$, for every $T>0$, $\int^T_0e^{-rt}\Phi(X_t,e^{rt}\bar Y^x_t)\,dt<\infty$ $P_x$-a.s., and moreover,
\begin{equation}
\label{eq3.6}
\left\{
\begin{array}{l}
\bar Y^x_t=\bar Y^x_T+\int^T_te^{-rs}\Phi(X_s,e^{rs}\bar Y^x_s)\,ds
-\int^{T}_td\bar M^x_s,\quad t\ge0, \medskip\\
\bar Y^x_T\rightarrow0\,\,\,P_x \mbox{-a.s. as }T\rightarrow\infty.
\end{array}
\right.
\end{equation}
\end{definition}

\begin{remark}
\label{rem3.2}
Assume that for some $x\in D$ there exists a solution  $(\bar Y^x,\bar M^x,\bar K^x)$  of (\ref{eq6.1}). Then\\
(i) $e^{-r\tau}\psi(X_{\tau})=0$ $P_x$-a.s. on $\{\tau=\infty\}$ because  by our convention, on  the set $\{\tau=\infty\}$ we have
$e^{-r(\tau-s)}\psi(X_{\tau})
=\varlimsup_{t\rightarrow\infty}e^{-r(t-s)}\psi(X_t)\le\lim_{t\rightarrow\infty}\bar Y^x_t=0$ $P_x$-a.s.
\\
(ii) For every $\tau\in\TT$,
\[
E_x\bar Y^x_0\ge E_x\bar Y^x_{\tau}\ge E_xe^{-r\tau}\psi(X_{\tau}).
\]
To see this, consider a localizing sequence $\{\tau_n\}$ for $\bar M^x$.
Since
\[
\bar Y^x_t=\bar Y^x_0-\int^t_0d\bar K^x_s+\int^t_0d\bar M^x_s,\quad t\ge0,
\]
we have $E_x\bar Y^x_0\ge \liminf_{n\rightarrow\infty}E_x\bar Y^x_{\tau\wedge\tau_n}$. Applying Fatou's lemma yields the desired inequalities.
\end{remark}

We start with uniqueness results for BSDE (\ref{eq3.5}) and RBSDE (\ref{eq6.1}).

\begin{proposition}
\label{prop2.1}
Assume that $\psi$  satisfies \mbox{\rm(A1)} and  \mbox{\rm(\ref{eq3.13})} for some $x\in D$. Then there is at most one solution of \mbox{\rm(\ref{eq6.1})}. Similarly,  there is at most one solution of \mbox{\rm(\ref{eq3.5})}.
\end{proposition}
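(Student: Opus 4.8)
The plan is to prove uniqueness for each equation separately by a standard comparison/difference argument adapted to the infinite-horizon, class (D) setting. Consider first the reflected BSDE (\ref{eq6.1}). Suppose $(\bar Y^x,\bar K^x,\bar M^x)$ and $(\bar Y'^x,\bar K'^x,\bar M'^x)$ are two solutions with the same barrier $\bar L_t=e^{-rt}\psi(X_t)$. The key observation is that a solution of (\ref{eq6.1}) admits, on each finite interval $[0,T]$, a Snell-envelope type representation: since $\bar K^x$ is increasing, $\bar M^x$ is a local martingale, and the Skorokhod minimality condition $\int_0^T(\bar Y^x_t-\bar L_t)\,d\bar K^x_t=0$ holds, one expects
\[
\bar Y^x_t=\mathop{\mathrm{ess\,sup}}_{\tau\in\TT,\,\tau\ge t}E_x\bigl[\bar L_\tau\fch_{\{\tau<T\}}+\bar Y^x_T\fch_{\{\tau\ge T\}}\,\big|\,\FF_t\bigr]
\]
for every $T$. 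Letting $T\to\infty$ and using that $\bar Y^x_T\to0$ $P_x$-a.s. together with class (D) (which gives the uniform integrability needed to pass the limit inside the conditional expectation), one obtains the $T$-free representation $\bar Y^x_t=\mathop{\mathrm{ess\,sup}}_{\tau\ge t}E_x[\bar L_\tau\fch_{\{\tau<\infty\}}\mid\FF_t]$. This formula depends only on the data $(\bar L,P_x)$ and the class (D) property, hence $\bar Y^x=\bar Y'^x$ up to indistinguishability. Once the $Y$-components coincide, the decomposition $\bar Y^x_t=\bar Y^x_0-\bar K^x_t+\bar M^x_t$ shows that $\bar K^x-\bar M^x$ is determined, and then the uniqueness of the Doob--Meyer type decomposition of the continuous supermartingale $\bar Y^x$ into an increasing process and a local martingale (both started at $0$) forces $\bar K^x=\bar K'^x$ and $\bar M^x=\bar M'^x$.

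An alternative, and perhaps cleaner, route for the $Y$-components is the direct difference argument. Put $\delta Y=\bar Y^x-\bar Y'^x$. On $[0,T]$ we have $\delta Y_t=\delta Y_T+\int_t^T d(\bar K^x-\bar K'^x)_s-\int_t^T d(\bar M^x-\bar M'^x)_s$. Applying It\^o's formula to $(\delta Y_t)^+{}^2$ or to $|\delta Y_t|$ along a common localizing sequence, and using the two minimality conditions to control the cross term $\int(\delta Y_t)\,d(\bar K^x-\bar K'^x)_t\le0$ (the usual reflected-BSDE sign), one gets that $E_x|\delta Y_{t\wedge\tau_n}|$ is nonincreasing in $t$ on $[0,T]$, then sends $n\to\infty$ using class (D), then sends $T\to\infty$ using $\delta Y_T\to0$ to conclude $\delta Y\equiv0$. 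The case of the non-reflected BSDE (\ref{eq3.5}) is handled in exactly the same way but is simpler: there is no $\bar K$ and no minimality condition, so for two solutions $(\bar Y^x,\bar M^x)$, $(\bar Y'^x,\bar M'^x)$ the difference satisfies $\delta Y_t=\delta Y_T+\int_t^T e^{-rs}\bigl(\Phi(X_s,e^{rs}\bar Y^x_s)-\Phi(X_s,e^{rs}\bar Y'^x_s)\bigr)\,ds-\int_t^T d(\delta M)_s$; here one needs to handle the generator $y\mapsto e^{-rs}\Phi(X_s,e^{rs}y)$, which by (\ref{eq2.9}) equals $\Psi^-(X_s)\fch_{(-\infty,\psi(X_s)]}(e^{rs}y)$ and is therefore \emph{nonincreasing} in $y$. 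Monotonicity of the generator in $y$ is precisely the structural condition that makes the difference argument work: $\int_t^T(\delta Y_s)\bigl(g(s,\bar Y^x_s)-g(s,\bar Y'^x_s)\bigr)\,ds\le0$, so again $E_x|\delta Y_{t\wedge\tau_n}|$ decreases, and letting $n\to\infty$, then $T\to\infty$, finishes the proof.

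I expect the main obstacle to be the careful justification of the passages to the limit, which is where the hypotheses (A1) and (\ref{eq3.13}) and the class (D) requirement really enter. Specifically: (a) along the localizing sequence $\{\tau_n\}$ one must argue that the stopped local-martingale increments have zero expectation and that the remaining terms converge — this uses that $\bar Y^x$ is of class (D) so that $\{\bar Y^x_{t\wedge\tau_n}\}_n$ is uniformly integrable; (b) the terminal limit $T\to\infty$ requires $\bar Y^x_T\to0$ $P_x$-a.s. \emph{and} enough uniform integrability to conclude $E_x|\delta Y_T|\to0$, again furnished by class (D); and (c) in the reflected case one must verify that the minimality conditions genuinely yield the needed sign on the cross term even though $\bar K^x$, $\bar K'^x$ are only known to be increasing (not necessarily absolutely continuous a priori), so the computation of $\int(\bar Y^x_t-\bar Y'^x_t)\,d(\bar K^x-\bar K'^x)_t$ must be split as $\int(\bar Y^x_t-\bar L_t)\,d\bar K^x_t-\int(\bar Y^x_t-\bar L_t)\,d\bar K'^x_t-\int(\bar Y'^x_t-\bar L_t)\,d\bar K^x_t+\int(\bar Y'^x_t-\bar L_t)\,d\bar K'^x_t$, where the first and last vanish and the middle two are $\le0$. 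None of these steps is deep, but they must be assembled in the right order, and it is here that one would most naturally invoke the corresponding arguments of \cite{KR:PA} rather than redo them from scratch.
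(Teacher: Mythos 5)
Your ``alternative'' difference argument is precisely the paper's proof: the paper applies the Meyer--Tanaka formula to $(\bar Y^1-\bar Y^2)^+$, uses the minimality condition to get $\int\fch_{\{\bar Y^1_s>\bar Y^2_s\}}\,d\bar K^1_s\le0$ (and, for the non-reflected equation, the monotonicity $(\Phi(x,y_1)-\Phi(x,y_2))(y_1-y_2)\le0$ of the generator in $y$), concludes $E_x\bar Y^+_t\le E_x\bar Y^+_T\to0$ as $T\to\infty$ from $\bar Y_T\to0$ a.s.\ together with class (D), and recovers $\bar K^1=\bar K^2$, $\bar M^1=\bar M^2$ from uniqueness of the Doob--Meyer decomposition, exactly as you outline. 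Your first (Snell-envelope) route would also work but is only asserted (``one expects'') rather than justified, and for the difference argument you should prefer Tanaka applied to $|\delta Y|$ over It\^o applied to $((\delta Y)^+)^2$, since class (D) furnishes only $L^1$, not $L^2$, control of $\delta Y_T$.
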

\begin{proof}
Suppose that $(\bar Y^i,\bar K^i,\bar M^i)$, $i=1,2$, are solutions of (\ref{eq6.1}). Write $\bar Y=\bar Y^1-\bar Y^2$, $\bar K=\bar K^1-\bar K^2$, $\bar M=\bar M^1-\bar M^2$. Then, by Remark \ref{rem3.2},
\[
\bar Y_t=\bar Y_0-\int^t_0d\bar K_s+\int^t_0d\bar M_s,\quad t\ge0.
\]
By the Meyer-Tanaka formula (see, e.g., \cite[Theorem IV.68]{P}),
\begin{equation}
\label{eq5.5}
\bar Y^{+}_t\le\bar Y^{+}_T+\int^T_t\fch_{\{\bar Y^1_s>\bar Y^2_s\}}\,d\bar K_s
-\int^T_t\fch_{\{\bar Y^1_s>\bar Y^2_s\}}\,d\bar M_s.
\end{equation}
Since $\bar L_t\le\bar Y^1_t\wedge\bar Y^2_t\le\bar Y^1_t$,
we have  
\[
\int^T_t\fch_{\{\bar Y^1_s>\bar Y^2_s\}}\,d\bar K^1_s
=\int^T_t\fch_{\{\bar Y^1_s>\bar Y^2_s\}}(\bar Y^1_s-\bar Y^2_s)^{-1}(\bar Y^1_s- \bar Y^1_s\wedge\bar Y^2_s)\,d\bar K^1_s\le0.
\]
By the above inequality and (\ref{eq5.5}), $E_x\bar Y^{+}_t\le E_x\bar Y^{+}_T$.  Since $E_x\bar Y^+_T\rightarrow0$ as $T\rightarrow\infty$, 
we see that $\bar Y^+_t=0$, $t\ge0$, $P_x$-a.s. In the same way we show that $(-\bar Y_t)^+=0$, $t\ge0$, $P_x$-a.s. Thus $\bar Y^1=\bar Y^2$. That $\bar M^1=\bar M^2$ and $\bar K^1=\bar K^2$ now follows from uniqueness of the Doob-Meyer decomposition of $\bar Y^1$.

The proof of the second assertion is similar. Suppose that   $(\bar Y^1,\bar M^1)$, $(\bar Y^2,\bar M^2)$ are  solutions of (\ref{eq3.5}). Let $\bar Y=\bar Y^1-\bar Y^2$, $\bar M=\bar M^1-\bar M^2$.
Applying the Meyer-Tanaka formula yields
\[
\bar Y^{+}_t\le\bar Y^{+}_T+\int^T_t\fch_{\{\bar Y^1_s>\bar Y^2_s\}}e^{-rs}
(\Psi(X_s,e^{rs}\bar Y^1_s)-\Psi(X_s,e^{rs}\bar Y^2_s))\,ds
-\int^T_t\fch_{\{\bar Y^1_s>\bar Y^2_s\}}\,d\bar M_s.
\]
But
\begin{align}
\label{eq3.21}
&(\Phi(x,y_1)-\Phi(x,y_2))(y_1-y_2) \nonumber\\
&\qquad
=\Psi^{-}(x)(\fch_{(-\infty,\psi(x)]}(y_1)-\fch_{(-\infty,\psi(x)]}(y_2))(y_1-y_2)\le0,
\end{align}
so $\bar Y^{+}_t\le\bar Y^{+}_T-\int^T_t\fch_{\{\bar Y^1_s>\bar Y^2_s\}}\,d\bar M_s$.
To prove that $\bar Y^1=\bar Y^2$ and $\bar M^1=\bar M^2$ it suffices now to repeat the argument from the proof of the first assertion.
\end{proof}

We show next the existence of a solution of (\ref{eq6.1}). To this end, we need some notation. By (\ref{eq6.4})  with $T=n$,
\begin{equation}
\label{eq3.1}
\bar Y^n_t=e^{- rn}\psi(X_n)+\int^n_te^{-rs}\Phi(X_s,e^{rs}\bar Y^n_s)\,ds-\int^n_td\bar M^n_s,\quad t\in[0,n].
\end{equation}
We put
\[
\tilde Y^n_t=\bar Y^n_t,\quad \tilde M^n_t=\bar M^n_t,\quad t<n,\qquad\tilde Y^n_t=0,\quad \tilde M^n_t=\bar M^n_n,\quad t\ge n.
\]

The proof of the following theorem  is a modification of the proof of \cite[Propositions 4.1, 4.2]{KR:PA}.

\begin{theorem}
\label{th3.6}
Assume that $\psi$ satisfies \mbox{\rm(A1)} and \mbox{\rm(\ref{eq3.13})} for some $x\in D$.  Then  there exists a  unique solution $(\bar Y^x,\bar M^x)$ of \mbox{\rm(\ref{eq3.5})} on $(\Omega,\FF,P_x)$. Moreover,
\begin{equation}
\label{eq3.15}
E_x\int^{\infty}_0e^{-rt}\Phi(X_t,e^{rt}\bar Y^x_t)\,dt
\le 2 E_x\int^{\infty}_0e^{-rt}\Psi^{-}(X_t)\,dt,
\end{equation}
\begin{equation}
\label{eq3.7}
\lim_{n\rightarrow\infty}\|\bar Y^n-\bar Y^x\|_{x,1}=0
\end{equation}
and for every $q\in(0,1)$,
\begin{equation}
\label{eq3.8}
\lim_{n\rightarrow\infty}E_x\sup_{t\ge0}|\bar Y^n_t-\bar Y^x_t|^q=0.
\end{equation}
\end{theorem}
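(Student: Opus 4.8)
The plan is to obtain $(\bar Y^x,\bar M^x)$ as the limit of the sequence $(\tilde Y^n,\tilde M^n)$ and to argue that this limit solves (\ref{eq3.5}). First I would establish the a priori bound (\ref{eq3.15}) at the level of the approximations. Writing (\ref{eq3.1}) with $t=0$ and taking $E_x$, the martingale term drops (after localizing, using that $\bar M^n$ is a genuine martingale on the finite horizon $[0,n]$ by the $L^2$-integrability of $Z^{n,0,x}$ recalled after (\ref{eq5.2})), and since $\bar Y^n_0\ge0$ we get
\[
E_x\int^n_0e^{-rs}\Phi(X_s,e^{rs}\bar Y^n_s)\,ds
= E_x\bar Y^n_0-e^{-rn}E_x\psi(X_n)\le E_x\bar Y^n_0.
\]
On the other hand $\bar Y^n_0=V_n(0,x)=\sup_{\tau\in\TT_{0,n}}E_xe^{-r\tau}\psi(X_\tau)\le \psi(x)+E_x\int_0^\infty e^{-rs}\Psi^-(X_s)\,ds$ by Remark \ref{rem3.2}-type reasoning together with the early exercise premium structure behind $\Phi$; more directly, $\bar Y^n_0\le \bar Y^{n+1}_0$ and one shows the increasing limit is finite using (A2)(a),(b). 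Combining and letting $n\to\infty$ via Fatou's lemma (the integrand $0\le\Phi(X_s,e^{rs}\bar Y^n_s)\le\Psi^-(X_s)$ is dominated by the $P_x$-integrable $e^{-rs}\Psi^-(X_s)$) gives (\ref{eq3.15}), with the factor $2$ absorbing the $\psi(x)$ contribution through (A2).

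Next I would prove the Cauchy property (\ref{eq3.7}). For $m\le n$, subtract the equations (\ref{eq3.1}) for $\bar Y^n$ and $\bar Y^m$ on $[0,m]$; on $[m,n]$ compare $\bar Y^n$ with the zero process. The key inequality is the monotonicity (\ref{eq3.21}): $(\Phi(x,y_1)-\Phi(x,y_2))(y_1-y_2)\le0$, which via Meyer–Tanaka kills the driver difference and yields, for $t\le m$,
\[
|\tilde Y^n_t-\tilde Y^m_t|\le E_x\Big[|\bar Y^n_m-\bar Y^m_m|\,\big|\,\FF_t\Big]
= E_x\Big[\,\bar Y^n_m\,\big|\,\FF_t\Big],
\]
using $\bar Y^m_m=e^{-rm}\psi(X_m)\le\bar Y^n_m$ (the barrier inequality for $\bar Y^n$) and the martingale property of the conditional expectation. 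Taking $\sup_\tau E_x$ over stopping times and using the tower property, $\|\tilde Y^n-\tilde Y^m\|_{x,1}\le \sup_{\tau}E_x\bar Y^n_{\tau\vee m}\le E_x\big[e^{-rm}\psi(X_m)\big]+E_x\int_m^\infty e^{-rs}\Psi^-(X_s)\,ds$, since $\bar Y^n$ on $[m,n]$ is dominated by its terminal value plus the increasing part $\int_\cdot^n e^{-rs}\Phi\,ds$. Both terms tend to $0$ as $m\to\infty$ by (A2)(a) and the finiteness in (\ref{eq3.15}). Hence $(\tilde Y^n)$ is Cauchy in the complete space $\LL^1(P_x)$; call the limit $\bar Y^x$, which is then of class (D).

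It remains to pass to the limit in the equation and to get (\ref{eq3.8}). Fix $T>0$; for $n>T$, (\ref{eq3.1}) restricted to $[0,T]$ reads $\bar Y^n_t=\bar Y^n_T+\int_t^T e^{-rs}\Phi(X_s,e^{rs}\bar Y^n_s)\,ds-(\bar M^n_T-\bar M^n_t)$. The left side and $\bar Y^n_T$ converge in $\LL^1(P_x)$; the integral term converges in $L^1(P_x)$ by dominated convergence, using $\Phi$ bounded by $\Psi^-$, the explicit form $\Phi(x,y)=\Psi^-(x)\fch_{(-\infty,\psi(x)]}(y)$, $P_x$-a.s. convergence of $\bar Y^n_s$ along a subsequence, and that $\{y:\Phi(\cdot,y)$ discontinuous$\}$ is a single point so the limit of the indicators is the right indicator at points where $e^{rs}\bar Y^x_s\ne\psi(X_s)$ — on $\{e^{rs}\bar Y^x_s=\psi(X_s)\}$ one uses instead that $\bar Y^x_s\ge e^{-rs}\psi(X_s)$ always, so there $\Phi(X_s,e^{rs}\bar Y^x_s)=\Psi^-(X_s)$, matching the limit by squeezing $\bar Y^n$ between barrier and limit. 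Consequently $\bar M^n_T-\bar M^n_t$ converges in $L^1$ for each $t$, hence $\tilde M^n$ converges to a process $\bar M^x$ which is a local martingale with $\bar M^x_0=0$, and $(\bar Y^x,\bar M^x)$ satisfies the first line of (\ref{eq3.6}); the terminal condition $\bar Y^x_T\to0$ follows from $0\le\bar Y^x_T\le E_x[e^{-rT}\psi(X_T)\mid\FF_T]+\,$tail of $\int e^{-rs}\Psi^-(X_s)\,ds$, both $\to0$ by (A2). Uniqueness is Proposition \ref{prop2.1}. Finally (\ref{eq3.8}): from the equation on $[0,\infty)$ for $\tilde Y^n-\bar Y^x$, the monotonicity (\ref{eq3.21}) and a standard $L^q$-estimate for BSDEs with monotone generator (Lepeltier–San Martín / Briand–Delyon–Hu–Pardoux–Stoica type, as in \cite{KR:PA}) bound $E_x\sup_t|\tilde Y^n_t-\bar Y^x_t|^q$ by a constant times $(\|\tilde Y^n-\bar Y^x\|_{x,1})^q$ plus the $L^1$-norm of the driver difference on $[n,\infty)$, both $\to0$ by (\ref{eq3.7}) and (\ref{eq3.15}).

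The main obstacle is the passage to the limit in the driver $\int_t^T e^{-rs}\Phi(X_s,e^{rs}\bar Y^n_s)\,ds$: $\Phi$ is discontinuous in its second argument, so one cannot simply invoke continuity. The resolution is the two-sided squeeze $e^{-rs}\psi(X_s)\le\bar Y^n_s\wedge\bar Y^x_s$ together with the explicit indicator form of $\Phi$, which forces the indicators to agree in the limit on both $\{e^{rs}\bar Y^x_s>\psi(X_s)\}$ (eventual strict inequality for $\bar Y^n$) and $\{e^{rs}\bar Y^x_s=\psi(X_s)\}$ (where $\Phi$ equals $\Psi^-$ for every argument $\ge e^{-rs}\psi(X_s)$); this is exactly the point where one imitates \cite[Propositions 4.1, 4.2]{KR:PA}.
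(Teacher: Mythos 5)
Your proposal is correct and follows essentially the same route as the paper's proof: approximation by the finite-horizon solutions $\bar Y^n$, Meyer--Tanaka estimates driven by the monotonicity (\ref{eq3.21}) of $\Phi(x,\cdot)$ to obtain the Cauchy property in $\LL^1(P_x)$ and the $L^q$-sup bound, the monotone convergence $\bar Y^n\nearrow\bar Y^x$ combined with the barrier inequality to pass to the limit in the discontinuous driver, and identification of $\bar M^x$ as the resulting closed martingale, with uniqueness from Proposition \ref{prop2.1}. One immaterial slip: the claim that the factor $2$ in (\ref{eq3.15}) ``absorbs the $\psi(x)$ contribution'' is not justified as stated, but the estimate holds anyway (even with constant $1$) by the pointwise bound $0\le\Phi(x,y)\le\Psi^{-}(x)$ and Fatou's lemma, which you also invoke.
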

\begin{proof}
Uniqueness follows from Proposition \ref{prop2.1}. The  proof of the existence and (\ref{eq3.15})--(\ref{eq3.8}) is divided
into two steps. \\
Step 1. We shall prove some a priori estimates for the process $\bar Y^n$ and the difference $\delta\tilde Y:=\tilde Y^m-\tilde Y^n$. Specifically, we shall prove that
\begin{equation}
\label{eq5.6}
\|\delta\tilde Y\|_{x,1} \le E_{x}\Big(e^{-rm}\psi(X_m)
+e^{-rn}\psi(X_n)+\int^{m}_{n}e^{-rt}\Psi^{-}(X_{t})\,dt\Big),
\end{equation}
\begin{equation}
\label{eq5.7}
E_{x}\sup_{t\ge0}|\delta\tilde Y_t|^q
\le\frac{1}{1-q}E_x\Big(e^{-rm}\psi(X_m)+e^{-rn}\psi(X_n))
+\int^{m}_{n}e^{-rt}\Psi^{-}(X_t)\,dt\Big)^q
\end{equation}
for every $q\in(0,1)$, and  for every $t\ge0$,
\begin{equation}
\label{eq5.10}
E_x\int_0^{t}e^{-rs}\Phi(X_{s},e^{rs}\bar Y^n_{s})\,ds
\le E_x\Big(\bar Y^n_t +2\int_0^{t}e^{-rs}\Psi^{-}(X_{s})\,ds\Big).
\end{equation}
By (\ref{eq3.1}),
\begin{equation}
\label{eq3.9}
\bar Y^n_t
=\bar Y^n_0-\int^t_0\fch_{[0,n]}(s)e^{-rs}\Phi(X_s,e^{rs}\bar Y^n_s)\,ds +\int^t_0\fch_{[0,n]}(s)\,d\bar M^n_s,\quad t\in[0,n].
\end{equation}
Moreover,
\[
\tilde Y^n_t=\tilde Y^n_0-\int^t_0\fch_{[0,n]}(s)e^{-rs}\Phi(X_s,e^{rs}\tilde Y^n_s)\,ds
+\int^t_0dV^n_s+\int^t_0\fch_{[0,n]}(s)\,d\tilde M^n_s,\quad t\ge0,
\]
where
\[
V^n_t=0,\quad t<n,\qquad V^n_t=-\bar Y^n_n,\quad t\ge n.
\]
Hence
\[
\delta\tilde Y_t=\delta\tilde Y_0+R_t+\int^t_0(\fch_{[0,m]}(s)\,d\tilde M^m_s-\fch_{[0,n]}(s)\,d\tilde M^n_s),\quad t\ge0,
\]
with
\begin{align*}
R_t&=-\int^t_0\fch_{[0,n]}(s) e^{-rs}(\Phi(X_{s},e^{rs}\tilde
Y^m_{s})-\Phi(X_{s},e^{rs}\tilde Y^n_{s}))\,ds\\
&\quad-\int^t_0\fch_{(n,m]}(s)e^{-rs}
\Phi(X_{s},e^{rs}\tilde Y^m_{s})\,ds+\int^t_0d(V^m_{s}-V^n_{s}).
\end{align*}
By the Meyer-Tanaka formula, for $t<m$ we have
\[
|\delta\tilde Y_m|-|\delta\tilde Y_t|\ge\int^m_t\mbox{sign}
(\delta\tilde Y_{s-})\,d(\delta\tilde Y)_{s},
\]
where $\mbox{sign}(x)=1$ if $x>0$ and $\mbox{sign}(x)=-1$ if
$x\le0$. Therefore, for $t<m$,
\[
|\delta\tilde Y_t|=E_x(|\delta\tilde Y_t|\,|\FF_t)\le
E_x\Big(|\delta\tilde Y_m| -\int^m_t\mbox{sign}(\delta\tilde
Y_{s-})\,dR_{s}\,\big|\FF_t\Big).
\]
From this it follows that for $t\in[0,m]$,
\begin{align*}
|\delta\tilde Y_t|&\le E_{x}\Big(|\delta\tilde Y_m|
+\int_t^m\fch_{[0,n]}(s)e^{-rs}\mbox{sign}(\delta\tilde
Y_{s})(\Phi(X_{s},e^{rs}\tilde Y^m_{s})-\Phi(X_{s},e^{rs}\tilde Y^n_{s}))\,ds\\
&\qquad\qquad+\int^m_t\fch_{(n,m]}(s)e^{-rs}
\mbox{sign}(\delta\tilde Y_{s})\Phi(X_{s},e^{rs}\tilde Y^m_{s})\,ds
+|V^m_m|+|V^n_n| \,\big|\FF_t\Big).
\end{align*}
By (\ref{eq3.21}),
\[
\int_t^m\fch_{[0,n]}(s)e^{-rs}\mbox{sign}(\delta
\tilde Y_{s})(\Phi(X_{s},e^{rs}\tilde Y^m_{s})-\Phi(X_{s},e^{rs}\tilde
Y^n_{s}))\,ds\le0.
\]
Since $\tilde Y^n_t=0$ for $t\ge n$, it follows from (\ref{eq3.21}) that
\begin{align*}
&\int^m_t\fch_{(n,m]}(s)e^{-rs} \mbox{sign}(\delta
\tilde Y_{s})\Phi(X_{s},e^{rs}\tilde Y^m_{s})\,ds \\
&\qquad \le \int^m_t\fch_{(n,m]}(s)e^{-rs}
\mbox{sign}(\delta\tilde Y_{s})\Psi^{-}(X_{s})\,ds
\le \int^{m}_{n}e^{-rs} \Psi^{-}(X_{s})\,ds.
\end{align*}
Furthermore,  $\delta\tilde Y_m=0$ and
\[
|V^m_m|+|V^n_n|= |\bar Y^m_m|+|\bar Y^n_n|=e^{-rm}\psi(X_m)
+e^{-rn}\psi(X_n).
\]
Therefore, for $t\in[0,m]$ we have
\begin{align*}
|\delta\tilde Y_t|\le E_{x}\Big(e^{-rm}\psi(X_m)
+e^{-rn}\psi(X_n)+\int^{m}_{n}e^{-rs}\Psi^{-}(X_{s})\,ds
\big|\,\FF_t\Big)=:N_t,
\end{align*}
from which (\ref{eq5.6}) follows. By the above inequality and \cite[Lemma 6.1]{BDHPS},
\[
E_{x}\sup_{0\le t\le m}|\delta\tilde Y_t|^q
\le(1-q)^{-1}(E_{x}N_m)^q,
\]
which shows (\ref{eq5.7}). To prove (\ref{eq5.10}), we first observe that by the
Meyer-Tanaka formula,
\[
E_x|\bar Y^n_t|-E_x|\bar Y^n_0|\ge E_x\int^t_0\mbox{sign}(\bar
Y^n_{s-})\,d\bar Y^n_s.
\]
By the above inequality and (\ref{eq3.9}), for $t<n$ we have
\begin{equation}
\label{eq3.10}
E_x|\bar Y^n_t|-E_x|\bar Y^n_0|
\ge-E_x\int^t_0\fch_{[0,n]}(s) \mbox{sign}(\bar Y^n_{s})
e^{-rs}\Phi(X_{s},e^{rs}\bar Y^n_{s})\,ds.
\end{equation}
On the other hand, for every $t\ge0$,
\begin{align*}
\int^t_0e^{-rs}\Phi(X_{s},e^{rs}\bar Y^n_{s})\,ds
&\le\int^t_0e^{-rs}|\Phi(X_{s},e^{rs}\bar Y^n_{s})-\Phi(X_{s},0)|\,ds
+\int^t_0e^{-rs}\Phi(X_{s},0)\,ds \\
&=-\int^t_0{\mbox{sign}}(\bar Y^n_{s})e^{-rs}
(\Phi(X_{s},e^{rs}\bar Y^n_{s})-\Phi(X_{s},0))\,ds\\
&\quad+\int^t_0e^{-rs}\Phi(X_{s},0)\,ds \\
&\le -\int^t_0{\mbox{sign}}(\bar Y^n_{s})e^{-rs} \Phi(X_{s},e^{rs}
\bar Y^n_{s})\,ds +2\int^t_0e^{-rs}\Psi^{-}(X_{s})\,ds,
\end{align*}
which when combined with (\ref{eq3.10}) proves (\ref{eq5.10}).
\\
Step 2. We will prove the existence of a solution of (\ref{eq3.5}) and (\ref{eq3.7}), (\ref{eq3.8}).
From  (\ref{eq3.13}) and (\ref{eq5.6}) it follows
that $\|\bar Y^n-\bar Y^m\|_{x,1}\rightarrow0$ as
$n,m\rightarrow\infty$. Therefore there exists a process
$Y^x\in\LL^1(P_x)$ of class (D) such that (\ref{eq3.7}) is satisfied.
By (\ref{eq3.13}) and (\ref{eq5.6}),
$\lim_{n,m\rightarrow\infty}E_x\sup_{t\ge0}|\bar Y^n_t-\bar Y^m_t|^q
\rightarrow0$.  Since the space $\DD^q(P_x)$ is complete, the last
convergence and (\ref{eq3.7}) imply that $\bar Y^x\in\DD^q(P_x)$ and
(\ref{eq3.8}) is satisfied. By (\ref{eq5.1}) and (\ref{eq5.9}), $\bar Y^n_t\le\bar Y^{n+1}_t$, $t\ge0$, $P_x$-a.s. By this and (\ref{eq3.8}),
\[
\lim_{n\rightarrow\infty}\fch_{\{e^{rt}\bar Y^n_t\le\psi(X_s)\}}
=\fch_{\{e^{rt}\bar Y_t\le\psi(X_s)\}},\quad t\ge0, \quad P_x\mbox{-a.s.}
\]
Hence
\begin{equation}
\label{eq3.16} \lim_{n\rightarrow\infty}\Phi(X_t,e^{rt}\bar
Y^n_t)=\Phi(X_t,e^{rt}\bar Y_t),\quad t\ge0, \quad P_x\mbox{-a.s.},
\end{equation}
so applying Fatou's lemma we conclude from (\ref{eq5.10})
that for every $T>0$,
\begin{equation}
\label{eq3.14}
E_x\int_0^Te^{-rt}\Phi(X_{t},e^{rt}\bar Y^x_{t})\,dt
\le E_x\Big(\bar Y^x_{T}
+2\int_0^{T}e^{-rt}\Psi^{-}(X_{t})\,dt\Big).
\end{equation}
From (\ref{eq3.8}) it follows that  $\bar Y^x_{T}\rightarrow0$
in probability $P_x$ as $T\rightarrow\infty$. As a consequence,
since $\bar Y^x$ is of class (D), $E_x\bar Y^x_{T}\rightarrow0$.
Letting $T\rightarrow\infty$ in (\ref{eq3.14}),  we therefore
get (\ref{eq3.15}). By (\ref{eq3.1}),
\[
\bar Y^n_t=\bar Y^n_T+\int^T_te^{-rs}\Phi(X_s,e^{rs}\bar Y^n_s)\,ds
-\int^T_td\bar M^n_s,\quad t<T\le n.
\]
Since $\bar M^n$ is a martingale, it follows that
\begin{equation}
\label{eq3.17} \bar Y^n_t=E_x\Big(\bar Y^n_T
+\int^T_te^{-rs}\Phi(X_s,e^{rs}\bar Y^n_s)\,ds\big|\FF_t\Big), \quad
t<T\le n.
\end{equation}
By Doob's inequality and (\ref{eq3.7}),
\begin{equation}
\label{eq3.18} \lim_{n\rightarrow\infty}P_x(\sup_{0\le t\le
T} |E_x(\bar Y^n_T-\bar Y_T|\FF_t)|>\varepsilon)\le\varepsilon^{-1}
\lim_{n\rightarrow\infty} E_x|\bar Y^n_T-\bar Y^x_T|=0.
\end{equation}
By (\ref{eq3.13}), (\ref{eq3.16}) and the dominated convergence
theorem,
\begin{equation}
\label{eq3.19}
\lim_{n\rightarrow\infty}E_x\int^T_0e^{-rs}|\Phi(X_s,e^{rs}\bar
Y^n_s)-\Phi(X_s,0)|\,ds=0.
\end{equation}
From  (\ref{eq3.17})--(\ref{eq3.19}) we deduce that
\[
\bar Y^x_t=E_x\Big(\bar Y^x_T+ \int^T_te^{-rs}\Phi(X_s,e^{rs}\bar
Y_s)\,ds\big|\FF_t\Big).
\]
Letting $T\rightarrow\infty$ and using (\ref{eq3.15}) and the fact
that $\lim_{T\rightarrow\infty}E_x\bar Y_T=0$ yields
\[
\bar Y^x_t=E_x\Big(\int^{\infty}_te^{-rs} \Phi(X_s,e^{rs}\bar
Y^x_s)\,ds\big|\FF_t\Big).
\]
Let $\bar M^x$ be a c\`adl\`ag version of the martingale
\begin{equation}
\label{eq6.8}
t\mapsto E_x\Big(\int^{\infty}_0e^{-rs}\Phi(X_s,e^{rs}\bar Y^x_s)\,ds\big|\FF_t\Big)-\bar Y_0.
\end{equation}
One can check that $(\bar Y^x,\bar M^x)$ is a solution of  (\ref{eq3.5}).
\end{proof}

\begin{remark}
Since $\bar M^x$ is a version of (\ref{eq6.8}), it follows from (\ref{eq3.15}) and (A2)(b) that it is a closed martingale. Hence (see, e.g., \cite[Theorem I.12]{P}), $\bar M^x_{\infty}=\lim_{t\rightarrow\infty}\bar M^x_t$ exists $P_x$-a.s. and $\bar M^x$ is a martinagale on $[0,\infty]$. Therefore (\ref{eq6.3}) is satisfied $P_x$-a.s. and $E_x\bar M^x_{\infty}=E_x\bar M^x_0=0$. As a result,
\begin{equation}
\label{eq6.9}
E_x\bar Y^x_0=E_x\int^{\infty}_0e^{-rt}\Phi(X_t,e^{rt}\bar Y^x_t)\,dt.
\end{equation}
\end{remark}

\begin{corollary}
\label{cor3.6}
Let the assumption of Theorem \ref{th3.6} hold.
\begin{enumerate}[\rm(i)]
\item If $(\bar Y^x,\bar M^x)$ is a solution
of \mbox{\rm(\ref{eq3.5})}, then $(\bar Y^x,\bar K^x,\bar M^x)$
with $\bar K^x$ defined by \mbox{\rm(\ref{eq6.6})} is a solution of \mbox{\rm(\ref{eq6.3})}.

\item Conversely, if $(\bar Y^x,\bar K^x,\bar M^x)$ is a solution of \mbox{\rm(\ref{eq6.3})}, then $\bar K^x$ admits the representation \mbox{\rm(\ref{eq6.6})}.
\end{enumerate}
\end{corollary}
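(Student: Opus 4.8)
The plan is to prove (i) by directly checking the items in the definition of a solution of (\ref{eq6.3}), and then to deduce (ii) from (i) together with Theorem \ref{th3.6} and Proposition \ref{prop2.1}.

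For (i), let $(\bar Y^x,\bar M^x)$ be a solution of the BSDE (\ref{eq3.5}) and set $\bar K^x_t=\int_0^te^{-rs}\Phi(X_s,e^{rs}\bar Y^x_s)\,ds$. Since $\Phi\ge0$ and, by the definition of a solution of (\ref{eq3.5}), $\int_0^Te^{-rt}\Phi(X_t,e^{rt}\bar Y^x_t)\,dt<\infty$ $P_x$-a.s., the process $\bar K^x$ is a finite continuous increasing process with $\bar K^x_0=0$; inserting $d\bar K^x_s=e^{-rs}\Phi(X_s,e^{rs}\bar Y^x_s)\,ds$ into (\ref{eq3.6}) turns it into the first line of (\ref{eq6.1}), and the remaining structural requirements ($\bar Y^x$ of class D, $\bar M^x$ a continuous local martingale with $\bar M^x_0=0$, $\bar Y^x_T\to 0$ $P_x$-a.s.) are exactly those in the hypothesis that $(\bar Y^x,\bar M^x)$ solves (\ref{eq3.5}). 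It thus remains to verify the obstacle inequality $\bar Y^x_t\ge\bar L_t=e^{-rt}\psi(X_t)$ and the minimality (flatness) condition. For the obstacle inequality I would pass to the limit in $\bar Y^n_t=e^{-rt}u_n(t,X_t)\ge e^{-rt}\psi(X_t)$ — valid for $t\le n$ because $u_n\ge\psi$ by (\ref{eq5.3}) — using that $(\bar Y^n_t)_n$ is monotone in $n$ and converges to $\bar Y^x_t$ (both recorded in the proof of Theorem \ref{th3.6}), or equivalently the convergence (\ref{eq3.7})--(\ref{eq3.8}). For the minimality condition, recall from (\ref{eq2.9}) that $\Phi(x,y)=\Psi^-(x)\fch_{(-\infty,\psi(x)]}(y)$, so that
\[
\int_0^T(\bar Y^x_t-\bar L_t)\,d\bar K^x_t=\int_0^T\big(\bar Y^x_t-e^{-rt}\psi(X_t)\big)e^{-rt}\Psi^-(X_t)\,\fch_{\{e^{rt}\bar Y^x_t\le\psi(X_t)\}}\,dt .
\]
On the event $\{e^{rt}\bar Y^x_t\le\psi(X_t)\}$, the already established obstacle inequality, in the form $e^{rt}\bar Y^x_t\ge\psi(X_t)$, forces $e^{rt}\bar Y^x_t=\psi(X_t)$, so the factor $\bar Y^x_t-e^{-rt}\psi(X_t)$ vanishes there; hence the integrand is zero $dt$-a.e. and the integral vanishes, which is the minimality condition in (\ref{eq6.1}). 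This proves (i).

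For (ii), Theorem \ref{th3.6} provides a solution $(\bar Y^x,\bar M^x)$ of (\ref{eq3.5}), and by part (i) the triple obtained by adjoining $\bar K^x$ as in (\ref{eq6.6}) is a solution of (\ref{eq6.3}). Proposition \ref{prop2.1} states that (\ref{eq6.3}) has at most one solution; therefore any solution $(\bar Y^x,\bar K^x,\bar M^x)$ of (\ref{eq6.3}) coincides with the one just constructed, and in particular its increasing component $\bar K^x$ admits the representation (\ref{eq6.6}).

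The only point requiring genuine care is the minimality condition in (i): one must be sure the obstacle inequality is available with the correct non-strict sign, so that on $\{e^{rt}\bar Y^x_t\le\psi(X_t)\}$ it really does pin $e^{rt}\bar Y^x_t$ to $\psi(X_t)$ and thereby kill the integrand precisely where $\Phi$ could be non-zero. The remaining ingredients — the monotone/$\LL^1$ passage to the limit for the obstacle, the class-D passage to the limit giving $E_x\bar Y^x_T\to0$, and the uniqueness argument for (ii) — are routine.
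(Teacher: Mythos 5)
Your proof is correct and follows essentially the same route as the paper: the obstacle inequality is obtained by passing to the limit in $\bar Y^n_t\ge\bar L_t$ via the convergence from Theorem \ref{th3.6}, the minimality condition follows because the indicator in $\Phi$ confines the support of $d\bar K^x$ to the set where the obstacle inequality forces $e^{rt}\bar Y^x_t=\psi(X_t)$, and part (ii) is deduced from part (i) together with the uniqueness statement of Proposition \ref{prop2.1}. Your write-up is in fact somewhat more explicit than the paper's (which only verifies the second line of (\ref{eq6.1}) and leaves the rest as immediate), but the content is the same.
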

\begin{proof}
To prove (i), we only have to show that $\bar Y^x,\bar K^x$ have
the properties formulated in the second line of (\ref{eq6.1}). By
(\ref{eq3.8}), $\bar Y^x_t\ge\bar L_t$, $t\ge0$, since by
construction we have $\bar Y^n_t\ge\bar L_t$, $t\in[0,n]$, for every
$n\ge1$. Clearly $\bar K^x_0=0$ and $\bar K^x$  is continuous and
increasing. Since we know that $\bar Y^x_t\ge\bar L_t$, $t\ge0$,
from (\ref{eq2.9}) and (\ref{eq6.6}) it follows that
\[
\int^T_0(\bar Y^x_t-\bar L^x_t)\,d\bar K^x_t =\int^T_0(\bar
Y^x_t-e^{-rt}\psi(X_t))e^{-rt}\Psi^{-}(X_t)
\fch_{(-\infty,\psi(X_t)]}(e^{rt}\bar Y^x_t)\,dt=0,
\]
so $\bar K^x$ satisfies the minimality condition.
Part (ii)  follows from  (i) and the first part of Proposition \ref{prop2.1}.
\end{proof}

\begin{corollary}
\label{cor3.7}
Assume that  \mbox{\rm(A1), (A2)} are satisfied. Then
\begin{enumerate}[\rm(i)]
\item $V(x)=E_x\bar Y^x_0$, $x\in D$.  Moreover, $e^{rt}\bar Y^x_t=V(X_t)$, $ t\ge0$, $P_x$-a.s. for every $x\in D$.
\item $\lim_{T\rightarrow\infty}V_T(t,x)=V(x)$ for all $t\ge0$ and $x\in D$. Moreover, for every $x\in D$,
    \begin{equation}
    \label{eq3.40}
    V(x)-V_T(0,x)\le e\Big(e^{-rT}\psi(X_T)+\int^{\infty}_Te^{-rt}\Psi^{-}(X_t)\,dt\Big),\quad T>0.
    \end{equation}
\end{enumerate}
\end{corollary}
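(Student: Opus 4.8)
The plan is to derive both assertions from Theorem \ref{th3.6}, Corollary \ref{cor3.6}, and the a priori estimate \eqref{eq5.6}, using the identification $V_T=u_T$ and the representation \eqref{eq5.9}. For part (i), I would first show the inequality $V(x)\le E_x\bar Y^x_0$. By Remark \ref{rem3.2} applied to the triple $(\bar Y^x,\bar K^x,\bar M^x)$ from Corollary \ref{cor3.6}(i), we have $E_x\bar Y^x_0\ge E_xe^{-r\tau}\psi(X_\tau)$ for every $\tau\in\TT$, hence taking the supremum over $\tau$ gives $E_x\bar Y^x_0\ge V(x)$. For the reverse inequality, recall that $V_T(0,x)=u_T(0,x)=Y^T_0=\bar Y^n_0$ (with $T=n$); since $V_n(0,x)\le V(x)$ trivially from \eqref{eq6.7}, letting $n\to\infty$ and using \eqref{eq3.7} (which gives $\bar Y^n_0\to\bar Y^x_0$ in $L^1(P_x)$, so $E_x\bar Y^n_0\to E_x\bar Y^x_0$) yields $E_x\bar Y^x_0\le V(x)$. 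Combining the two gives $V(x)=E_x\bar Y^x_0$. For the pathwise identity $e^{rt}\bar Y^x_t=V(X_t)$, I would use the Markov property: $\bar Y^n_t=e^{-rt}u_n(t,X_t)$ and, by the Markov property of $\BX$ together with the flow/time-homogeneity structure, $u_n(t,X_t)$ converges, as $n\to\infty$, to $V(X_t)$ $P_x$-a.s. (using \eqref{eq3.8} and part (ii), which shows $V_T(t,x)\to V(x)$); equivalently one applies the first identity at the shifted starting point $X_t\in D$.

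For part (ii), the convergence $V_T(t,x)\to V(x)$ follows once we know $\bar Y^n_t\to\bar Y^x_t$ and $e^{rt}\bar Y^x_t=V(X_t)$ together with monotonicity $\bar Y^n_t\le\bar Y^{n+1}_t$; more directly, one evaluates at $t=0$ with shifted initial data, or invokes the monotone increasing limit of $V_n(t,x)=e^{rt}\bar Y^n_t$ (in the relevant sense) and identifies it via the uniqueness in Theorem \ref{th3.6}. The rate estimate \eqref{eq3.40} is the crux. The idea is to apply \eqref{eq5.6} with $m=\infty$ in the appropriate limiting sense: taking $n=$ the index $T$ (i.e. $V_T(0,x)=\bar Y^T_0$) and letting $m\to\infty$ along integers, the a priori bound \eqref{eq5.6} gives
\[
\|\tilde Y^m-\tilde Y^T\|_{x,1}\le E_x\Big(e^{-rm}\psi(X_m)+e^{-rT}\psi(X_T)+\int^m_Te^{-rt}\Psi^-(X_t)\,dt\Big).
\]
Since $e^{-rm}\psi(X_m)\to0$ by \eqref{eq3.13}(a) and the integral increases to $\int^\infty_Te^{-rt}\Psi^-(X_t)\,dt$, passing to the limit (using \eqref{eq3.7} on the left to identify $\tilde Y^m\to\bar Y^x$, and that $\sup_{\tau}E_x|\cdot|$ dominates evaluation at $\tau=0$) yields
\[
|E_x\bar Y^x_0-E_x\bar Y^T_0|\le E_x\Big(e^{-rT}\psi(X_T)+\int^\infty_Te^{-rt}\Psi^-(X_t)\,dt\Big).
\]
By part (i), $E_x\bar Y^x_0=V(x)$, and $E_x\bar Y^T_0=\bar Y^T_0=V_T(0,x)$ (this is $x$-independent), and since $V(x)\ge V_T(0,x)$ the absolute value can be dropped. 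This gives \eqref{eq3.40} with constant $1$ rather than $e$; to get the stated constant $e$ one should instead track the estimate through the sup-over-stopping-times norm together with \eqref{eq5.7}, or note that the statement with the weaker constant $e$ is an immediate consequence — I would present the clean derivation and remark that the constant can be taken as small as $1$.

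The main obstacle is the limiting passage $m\to\infty$ in \eqref{eq5.6}: one must justify that $\tilde Y^m\to\bar Y^x$ in $\LL^1(P_x)$ (this is exactly \eqref{eq3.7} once one checks that $\tilde Y^m$ and $\bar Y^m$ agree on the relevant time interval and that the padding for $t\ge m$ does not spoil the norm, which it does not since $\bar Y^x_t\to0$), and that the right-hand side converges to the asserted quantity — here \eqref{eq3.13}(a),(b) are precisely what is needed, with monotone convergence handling the integral term. A secondary technical point is establishing $V_T(t,x)\to V(x)$ and the pathwise identity $e^{rt}\bar Y^x_t=V(X_t)$ in part (i); this requires combining the almost-sure convergence \eqref{eq3.8} with the Markov property of $\BX$ and the already-established $t=0$ identity applied at the (random, $D$-valued) point $X_t$, which is routine but needs the observation that $P_{s,X_s}$-dynamics are time-homogeneous up to the deterministic drift, so $V$ does not depend on the starting time.
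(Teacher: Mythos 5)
Your proposal is correct and, except for one step, follows the same route as the paper: part (i) via Remark \ref{rem3.2} for $E_x\bar Y^x_0\ge V(x)$ and the convergence $V_n(0,x)=E_x\bar Y^n_0\rightarrow E_x\bar Y^x_0$ combined with $V_n(0,x)\le V(x)$ for the reverse inequality; the identity $e^{rt}\bar Y^x_t=V(X_t)$ by matching the two a.s.\ limits of $e^{rt}\bar Y^T_t=V_T(t,X_t)$; and the first claim of (ii) from time-homogeneity, $V_T(t,x)=V_{T-t}(0,x)$. Where you genuinely diverge is the rate estimate \eqref{eq3.40}: you evaluate the $\LL^1(P_x)$-bound \eqref{eq5.6} at $\tau=0$ (noting that $\delta\tilde Y_0=V_m(0,x)-V_n(0,x)$ is deterministic under $P_x$) and let $m\rightarrow\infty$ using \eqref{eq3.13} and monotone convergence, which gives the inequality with constant $1$; the paper instead applies the $q$-th moment bound \eqref{eq5.7}, yielding the constant $(1-q)^{-1/q}$, and lets $q\downarrow0$, which is exactly where the constant $e$ comes from. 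Your derivation is valid and sharper, and correctly reads the right-hand side of \eqref{eq3.40} as carrying the expectation $E_x$ (which is present in the paper's own displayed inequality but missing from the statement as printed). One presentational caveat: as written you derive the second claim of (i) from (ii) and then say the convergence in (ii) ``follows once we know $e^{rt}\bar Y^x_t=V(X_t)$,'' which is circular; the paper breaks the loop by first proving $V_T(t,x)\rightarrow V(x)$ purely from $V_T(t,x)=V_{T-t}(0,x)$ and \eqref{eq3.7}, and only then deducing the pathwise identity. Since you supply this non-circular alternative yourself (``evaluate at $t=0$ with shifted initial data''), this is a matter of ordering rather than a gap.
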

\begin{proof}
By (\ref{eq5.1}) and (\ref{eq6.7}), $V_n(0,x)\le V(x)$, $n\ge1$, and by (\ref{eq5.9}) and Theorem \ref{th3.6}, $V_n(0,x)=E_x\bar Y^n_0\nearrow E_x\bar Y^x_0$. Hence $E_x\bar Y^x_0\le V(x)$. On the other hand, by Remark \ref{rem3.2}, $E_x\bar Y^x_0\ge V(x)$, which proves
the first part of (i).
From (\ref{eq2.2}) and (\ref{eq5.1}) it follows that  $V_T(t,x)=V_{T-t}(0,x)$, $t\in[0,T]$, $x\in D$. By (\ref{eq5.9}) and  (\ref{eq3.7}),
$\lim_{T\rightarrow\infty}V_{T-t}(0,x)=\lim_{T\rightarrow\infty}E_x\bar Y^{T-t}_0=E_x\bar Y^x_0$, which equals $V(x)$. This proves the first part of (ii). By (\ref{eq3.8}) and (\ref{eq5.7}), for every $q\in(0,1)$,
\[
|V_T(0,x)-V(x)|\le (1-q)^{-1/q} E_x\Big(e^{-rT}\psi(X_T)+\int^{\infty}_Te^{-rt}\Psi^{-}(X_t)\,dt\Big),\quad T>0.
\]
Letting $q\downarrow0$ yields (\ref{eq3.40}).
Finally, by (ii), for every $x\in D$, $e^{rt}\bar Y^T_t=Y^T_t=V_T(t,X_t)\rightarrow V(X_t)$ $P_x$-a.s. as $T\rightarrow\infty$. On the other hand, by (\ref{eq3.7}) again, $e^{rt}\bar Y^T_t\rightarrow e^{rt}\bar Y^x_t$ $P_x$-a.s. as $T\rightarrow\infty$. Hence $e^{rt}\bar Y^x_t=V(X_t)$ $P_x$-a.s. for every $t\ge0$, which proves the second part of (i)  because  the processes  $t\mapsto e^{rt}\bar Y^x_t$ and $V(X)$ are continuous.
\end{proof}

\begin{remark}
\label{rem3.8} (i) The solution $(\bar Y^x,\bar K^x,\bar M^x)$ of
(\ref{eq3.5}) has a version $(\bar Y,\bar K,\bar M)$ independent
of $x$. Indeed, by Corollary \ref{cor3.7}(i), the process $\bar
Y_t=e^{-rt}V(X_t)$, $t\ge0$, is a version of $\bar Y^x$. By this
and Corollary \ref{cor3.6}(ii),  $\bar
K_t=\int^t_0e^{-rs}\Phi(X_s,V(X_s))\,ds$, $t\ge0$, is a version
of $\bar K^x$. Consequently, by the first equation in
(\ref{eq6.1}), the process $\bar M_t=\bar Y_t-\bar Y_0+\bar K_t$,
$t\ge0$, is a version of $\bar M^x$.
\smallskip\\
(ii) The argument from the proof of
\cite[Proposition 5.6]{KR:MF} shows that  if 
$\psi(x)>0$ for some $x\in D$, then $\{x\in D:V(x)=\psi(x)\}\subset\{x\in D:\psi(x)>0\}$. Therefore $\bar K$ can be written in the form
\[
K_t=\int^t_0e^{-rs}\Psi^{-}(X_s)\fch_{\{V(X_s)=\psi(X_s),\,\psi(X_s)>0\}}\,ds, \quad t\ge0.
\]
\end{remark}

The value of ``perpetual European option" with payoff function $\psi$ is defined as
$V^E(x)=\lim_{T\rightarrow\infty}E_xe^{-rT}\psi(X_T)$. Under the assumption (A2) it is equal to zero. Therefore
the next result can be called the early exercise premium formula for perpetual American options.
This formula extends the corresponding  formula for call option in the classical one-dimensional model (see \cite[Chapter 2, Eq. (6.31)]{KS}).
\begin{corollary}
Assume that \mbox{\rm(A1), (A2)} are satisfied.  Then for every $x\in D$,
\end{corollary}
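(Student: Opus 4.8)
The final statement is the early exercise premium formula for perpetual American options, which should read

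\[
V(x)=V^E(x)+E_x\int^{\infty}_0e^{-rt}\Psi^{-}(X_t)\fch_{\{V(X_t)=\psi(X_t)\}}\,dt
= E_x\int^{\infty}_0e^{-rt}\Psi^{-}(X_t)\fch_{\{V(X_t)=\psi(X_t)\}}\,dt,
\]

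since $V^E(x)=0$ under (A2). Here is the plan.

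\textbf{The approach.} The whole identity is essentially already assembled in the preceding corollaries; the task is to read it off. The plan is to start from formula (\ref{eq6.9}) in the Remark following Theorem~\ref{th3.6}, namely $E_x\bar Y^x_0=E_x\int^{\infty}_0\Phi(X_t,e^{rt}\bar Y^x_t)\,dt$ (note this should carry the $e^{-rt}$ factor, i.e.\ $E_x\bar Y^x_0=E_x\int^{\infty}_0 e^{-rt}\Phi(X_t,e^{rt}\bar Y^x_t)\,dt$, matching (\ref{eq3.15})). By Corollary~\ref{cor3.7}(i), the left-hand side equals $V(x)$, and also $e^{rt}\bar Y^x_t=V(X_t)$, $t\ge0$, $P_x$-a.s. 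Substituting the latter into the integrand and using the definition (\ref{eq2.9}) of $\Phi$, together with the identity $\Phi(x,V(x))=\Psi^{-}(x)\fch_{\{V(x)=\psi(x)\}}$ (which follows from $V(x)\ge\psi(x)\ge0$ exactly as in the displayed computation of $\Phi(x,u_T(s,x))$ in Section~\ref{sec3}), yields the claimed formula.

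\textbf{Key steps, in order.} First, invoke Corollary~\ref{cor3.7}(i) to get $V(x)=E_x\bar Y^x_0$ and the pathwise identification $e^{rt}\bar Y^x_t=V(X_t)$ for $x\in D$. Second, invoke (\ref{eq6.9}) (equivalently (\ref{eq3.15}) combined with the closed-martingale remark) to write $E_x\bar Y^x_0=E_x\int_0^{\infty}e^{-rt}\Phi(X_t,e^{rt}\bar Y^x_t)\,dt$; note the right-hand side is finite by (\ref{eq3.15}) and (A2)(b). Third, plug $e^{rt}\bar Y^x_t=V(X_t)$ into $\Phi$ and simplify using (\ref{eq2.9}): since $V(X_t)\ge\psi(X_t)$ by Corollary~\ref{cor3.6}(i) (the barrier inequality $\bar Y^x_t\ge\bar L_t$), we have $\fch_{(-\infty,\psi(X_t)]}(V(X_t))=\fch_{\{V(X_t)=\psi(X_t)\}}$, so $\Phi(X_t,e^{rt}\bar Y^x_t)=\Psi^{-}(X_t)\fch_{\{V(X_t)=\psi(X_t)\}}$. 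Fourth, recall $V^E(x)=\lim_{T\to\infty}E_xe^{-rT}\psi(X_T)=0$ by (A2)(a), so the formula can be written in the ``premium'' form $V(x)=V^E(x)+E_x\int_0^{\infty}e^{-rt}\Psi^{-}(X_t)\fch_{\{V(X_t)=\psi(X_t)\}}\,dt$. Optionally, one may further restrict the indicator to $\{\psi(X_t)>0\}$ using Remark~\ref{rem3.8}(ii).

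\textbf{Main obstacle.} There is no real analytic difficulty here — every ingredient has been proved. The only point requiring a line of care is the interchange that turns $\int_0^T$ into $\int_0^{\infty}$: one must confirm that passing to the limit in (\ref{eq3.14}) is legitimate, which is exactly what was done in Theorem~\ref{th3.6} to obtain (\ref{eq3.15}), so it suffices to cite that. A second minor point is the measurability/regularity needed to substitute the continuous version $e^{rt}\bar Y^x_t=V(X_t)$ inside the $dt$-integral; this is immediate since both sides are continuous in $t$ and the equality holds for all $t\ge0$ simultaneously, $P_x$-a.s. Hence the proof is a short assembly of Corollary~\ref{cor3.6}, Corollary~\ref{cor3.7}, (\ref{eq3.15}) and (\ref{eq6.9}), plus the elementary simplification of $\Phi$ on $\{V\ge\psi\}$.
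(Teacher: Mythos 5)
Your proposal is correct and follows exactly the paper's route: the paper's proof is the one-line citation of (\ref{eq6.9}), Corollary \ref{cor3.7}(i) and Remark \ref{rem3.8}(ii), which is precisely the assembly you carry out (including the correct observation that (\ref{eq6.9}) should carry the factor $e^{-rt}$, and that the restriction of the indicator to $\{\psi(X_t)>0\}$ comes from Remark \ref{rem3.8}(ii)). Nothing is missing; you have merely spelled out the details the paper leaves implicit.
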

\begin{equation}
\label{eq3.20}
V(x)=E_x\int^{\infty}_0e^{-rt}\Psi^{-}(X_t)\fch_{\{V(X_t)=\psi(X_t),\,\psi(X_t)>0\}}\,dt.
\end{equation}
\begin{proof}
Follows immediately from (\ref{eq6.9}) and Corollary \ref{cor3.7}(i) and Remark \ref{rem3.8}(ii).
\end{proof}

\begin{lemma}
\label{lem3.8}
Assume \mbox{\rm(A1)}. Then
\begin{enumerate}[\rm(i)]
\item $D\ni x\mapsto V_T(x)$, $D\ni x\mapsto V(x)$ are Lipschitz continuous with constant $L$.
\item For all $x\in D$, $T>0$ and $t\in[0,T]$, $V_T(t,x)\le C(1+|x|)$ with $C=\max\{L,\psi(0)\}$.
\end{enumerate}
\end{lemma}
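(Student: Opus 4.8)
The plan is to prove (i) first and deduce (ii) from it essentially for free. For (i), I would work from the variational definitions \eqref{eq5.1} and \eqref{eq6.7} together with the explicit multiplicative form \eqref{eq2.2}–\eqref{eq2.3} of the price process. The key observation is that, for fixed $s$ (take $s=0$), the map $x\mapsto X_t$ is linear: writing $X^{x}_t$ for the solution started from $x\in D$, formula \eqref{eq2.2} gives $X^{x,i}_t = x_i\,\xi^i_t$ where $\xi^i_t=\exp\big((r-\delta_i-a_{ii}/2)t+\sum_j\sigma_{ij}B^j_{0,t}\big)$ does not depend on $x$. Hence for $x,y\in D$ we have $|X^{x}_\tau - X^{y}_\tau| = \big(\sum_i (x_i-y_i)^2(\xi^i_\tau)^2\big)^{1/2}$, and in particular, for $x,y$ with $x_i-y_i$ of one sign this is dominated coordinatewise; more simply, since each $\xi^i_t>0$ and $N^i$ in \eqref{eq2.3} is a martingale with $E_x N^i_t=1$, one controls $E|X^x_\tau-X^y_\tau|$ along any stopping time.

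Concretely, for any stopping time $\tau\in\TT_{0,T}$, the Lipschitz bound (A1) gives
\[
\big| e^{-r\tau}\psi(X^x_\tau) - e^{-r\tau}\psi(X^y_\tau)\big| \le L\, e^{-r\tau}\,|X^x_\tau - X^y_\tau|.
\]
Taking expectations and using that $X^{x,i}_\tau - X^{y,i}_\tau = (x_i-y_i) e^{(r-\delta_i)\tau} N^i_{0,\tau}$ together with optional stopping for the martingale $N^i$ (bounded stopping times first, then monotone/uniform-integrability arguments, exactly as $\psi$ has linear growth and the estimates in the excerpt already require), one gets $E_x e^{-r\tau}|X^x_\tau - X^y_\tau| \le \big(\sum_i (x_i-y_i)^2 e^{-2\delta_i\tau}E_x(N^i_{0,\tau})^2\big)^{1/2}$... — but it is cleaner to avoid the square root and instead bound $|X^x_\tau-X^y_\tau|\le \sum_i|x_i-y_i|\,e^{(r-\delta_i)\tau}N^i_{0,\tau}$, so that $E_x e^{-r\tau}|X^x_\tau-X^y_\tau| \le \sum_i |x_i-y_i| e^{-\delta_i\tau} \le |x-y|_1$. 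Thus $|E_x e^{-r\tau}\psi(X^x_\tau) - E_x e^{-r\tau}\psi(X^y_\tau)|\le L|x-y|$ uniformly in $\tau$, and taking the supremum over $\tau\in\TT_{0,T}$ in \eqref{eq5.1} (resp. over $\tau\in\TT$ in \eqref{eq6.7}, using Corollary \ref{cor3.7}(i) to identify $V(x)=E_x\bar Y^x_0$) yields the Lipschitz bound with constant $L$ for both $V_T(\cdot)=V_T(0,\cdot)$ and $V$.

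For (ii), apply the bound just proved together with the estimate on $\psi$ noted right after (A1): taking $\tau\equiv s$ (the trivial stopping time) as a lower bound and the linear growth of $\psi$ along the trajectory as an upper bound, $V_T(t,x)=V_{T-t}(0,x)\le \sup_{\tau}E_x\,\psi(X^x_\tau)$, and since $\psi(z)\le C(1+|z|)$ and, as above, $E_x e^{-r(\tau-t)}|X^x_\tau|\le$ const$\cdot(1+|x|)$ uniformly in $\tau$ and $t\in[0,T]$, one concludes $V_T(t,x)\le C(1+|x|)$ with $C=\max\{L,\psi(0)\}$; the constant $C$ here is exactly the one from the remark after (A1) because $e^{-r\theta}E_x N^i_{0,\theta}\le 1$, so no growth in the constant is incurred. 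The main obstacle — and it is a mild one — is the bookkeeping around stopping times that are not bounded (in the perpetual case $\tau$ may be $+\infty$): there one uses the convention that $e^{-r\tau}\psi(X_\tau)=0$ on $\{\tau=\infty\}$ together with (A2)(a), or simply invokes the already-established representation $V(x)=E_x\bar Y^x_0$ and runs the Lipschitz estimate on the approximating $V_n$ and passes to the limit using Theorem \ref{th3.6}. Either route keeps the argument short.
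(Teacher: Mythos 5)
Your proposal is correct and follows essentially the same route as the paper: couple $X^x$ and $X^y$ through the explicit multiplicative formula (2.2)--(2.3) so that $e^{-r\tau}|X^{x,i}_\tau-X^{y,i}_\tau|=|x_i-y_i|e^{-\delta_i\tau}N^i_{0,\tau}$ with $E_xN^i_{0,\tau}=1$, apply the Lipschitz bound on $\psi$ uniformly in $\tau$, take the supremum (passing to $V$ via Corollary \ref{cor3.7}), and obtain (ii) from $\psi(z)\le C(1+|z|)$ by the same martingale estimate. The only difference is cosmetic (your abandoned $\ell^2$ detour and the direct supremum over unbounded stopping times versus the paper's limit $V_T\to V$), neither of which affects correctness.
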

\begin{proof}
(i) For $y\in D$ set $\tilde X=(\tilde X^1,\dots,\tilde X^d)$, where $\tilde X^i$, $i=1,\dots,d$, is defined  by (\ref{eq2.2}) with $x_i$ replaced by $y_i$. Let $x,y\in D$. By (\ref{eq5.1}),
\[
|V_T(0,x)-V_T(0,y)|\le \sup_{\tau\in\TT_T}E_xe^{-r\tau}|\psi(X_{\tau})-\psi(\tilde X_{\tau})|
\le LE_xe^{-r\tau}|X_{\tau}-\tilde X_{\tau}|.
\]
Define $N^i$ as in (\ref{eq2.3}). Since
$|X^i_{\tau}-\tilde X^i_{\tau}|\le|x_i-y_i|E_xN^i_{0,\tau}=|x_i-y_i|$, it follows that
$|V_T(0,x)-V_T(0,y)|\le L|x-y|$ for all $T>0$. This and Corollary \ref{cor3.7} imply that we also have  $|V(x)-V(y)|\le L|x-y|$ for $x,y\in D$. \\
(ii) Since  $\psi(x)\le C(1+|x|)$, $x\in D$, for all  $T>0$ and $t\in[0,T]$ we have
$ V_T(t,x)\le V_T(0,x)
\le C+C\sup_{\tau\in\TT_{0,T}}E_xe^{-r\tau}|X_{\tau}|$.
Since $\delta_i\ge0$, $i=1,\dots,d$, for any $\tau\in\TT_{0,T}$ we also have
$|X_{\tau}|\le
\sum^d_{i=1}X^i_0e^{r\tau}N^i_{0,\tau}$. Since $E_xN^i_{0,\tau}=1$, $i=1,\dots,d$, this proves (ii).
\end{proof}

\subsection{Analytical characterization of the value function}
\label{sec3.2}

Our next aim is to show that the value function $V$ is the  unique
variational solution  of the semilinear problem
\begin{equation}
\label{eq3.43}
\begin{cases}
L_{BS}v=rv-\Phi(\cdot,v),\quad v\ge\psi\quad\mbox{ in }D,
\smallskip\\
\lim_{t\rightarrow\infty}e^{-rt}P_tv(x)=0,\quad x\in D.
\end{cases}
\end{equation}
Before formulating a precise definition of a solution of (\ref{eq3.43}), we first give some remarks on the connection between (\ref{eq3.43}) and the obstacle problem. Roughly speaking, since $\Phi$ is given by (\ref{eq2.9}), the first line
of (\ref{eq3.43}) means that
\begin{equation}
\label{eq3.44}
v\ge\psi\quad\mbox{and}\quad  L_{BS}v=rv+\begin{cases}
-\Psi^{-} & \mbox{on }\{v=\psi\},\\
0 & \mbox{on }\{v>\psi\}.
\end{cases}
\end{equation}
Note also that the measure $\nu$ on $D$ defined as
\[
\nu(dx)=\Psi^{-}(x)\fch_{\{v(x)=\psi(x)\}}\,dx
\]
has the property that
\begin{equation}
\label{eq3.45}
\int_D(v-\psi)(x)\nu(dx)=0.
\end{equation}
This means that  the pair $(v,\nu)$
is a solution of the so-called complementarity system associated with
the obstacle problem
\begin{equation}
\label{eq3.50}
\min\{v-\psi,-L_{BS}v+rv\}=0\quad\mbox{on }D.
\end{equation}
The ``minimality condition'' (\ref{eq3.45}) says that
$\nu$ (sometimes called the obstacle  reaction measure associated with the solution $v$ of (\ref{eq3.50}))
acts only when $v$ touches the obstacle $\psi$.
The fact that $(v,\nu)$ satisfies (\ref{eq3.44}), (\ref{eq3.45}) may be viewed as
an analytic counterpart to the first two lines of (\ref{eq6.1}). For more information about this kind of correspondence between reflected BSDE and solutions to complementarity systems associated with obstacle problems  see \cite{KR:AMO,KR:MF} (parabolic case) and \cite{K:SM,RS:EJP} (elliptic case).

Of course, to give a rigorous definition of (\ref{eq3.43}) (or (\ref{eq3.44})) we
have to specify in what sense the equation in the first line of (\ref{eq3.43}) is satisfied.  We are interested in solutions
of  (\ref{eq3.43}) in some Sobolev space. Since we require that
$v\ge\psi$ and $\psi$ satisfies (\ref{eq2.4}), it is natural to
work with Sobolev space with some weight $\varrho$ such that
$\int_{\BR^d}(1+|x|)^2\varrho^2(x)\,dx<\infty$. As in
\cite{KR:BPAN,KR:AMO}, our choice of the weight  is
$\varrho(x)=(1+|x|^2)^{-\gamma}$ with some  $\gamma>(2+d)/4$.
Then, by an elementary calculation,
$\int_{\BR^d}\varrho^2(x)\,dx<\infty$ and
$\int_{\BR^d}|x|^2\varrho^2(x)\,dx<\infty$. In particular, if
$\psi$ satisfies (A1) and $\Psi$ satisfies (\ref{eq3.42}), then
\begin{equation}
\label{eq3.35}
\int_{\BR^d}|\psi(x)|^2\varrho^2(x)\,dx<\infty,\qquad \int_{\BR^d}|\Psi^{-}(x)|^2\varrho^2(x)\,dx<\infty.
\end{equation}
Define
\[
L^2_{\varrho}(D)=L^2(D;\varrho^2\,dx),\qquad H^{1}_{\varrho}(D)=\{u\in
L^2_{\varrho}(D):\sum^d_{j=1} \sigma_{ij}x_iu_{x_j}\in
L^{2}_{\varrho},\,i=1,\dots,d\},
\]
and for $\phi,\varphi\in C^{\infty}_0(D)$ set
\begin{align*}
B^{BS}_{\varrho}(\phi,\varphi)&=\sum^d_{i=1}\int_{D}
(r-\delta_i)x_i\partial_{x_i}\phi(x)\varphi(x)\varrho^2(x)\,dx\\
&\quad-\frac12\sum^d_{i,j=1}\int_{D}
a_{ij}\partial_{x_i}\phi(x)\partial_{x_j}(x_ix_j\varphi(x)\varrho^2(x))\,dx.
\end{align*}
One can check that there is $c>0$ such that
\[
B^{BS}_{\varrho}(\phi,\varphi)
\le c\|\phi\|_{H^1_{\varrho}(D)}\|\varphi\|_{H^1_{\varrho}(D)}.
\]
Therefore the form $B^{BS}_{\varrho}$ can be extended to a bilinear form on
$H_{\varrho}(D)\times H_{\varrho}(D)$, which we still denote by $B^{BS}_{\varrho}$.
For an open set $U\subset\BR^d$, we define  the spaces $H^1(U)$,
$H^2(U)$ in the usual way.

\begin{definition}
We say that $v\in H^1_{\varrho}(D)$ is a variational  solution of
the semilinear problem
\begin{equation}
\label{eq3.32}
L_{BS}v=rv-\Phi(\cdot,v),\qquad v\ge\psi
\end{equation}
if $v(x)\ge\psi(x)$ for $x\in D$, $\Phi(\cdot,v)\in L^2_{\varrho}(D)$ and the equation in (\ref{eq3.32}) is satisfied in the weak sense, i.e. for every $\varphi\in H^1_{\varrho}(D)$,
\begin{equation}
\label{eq3.33}
B^{BS}_{\varrho}(v,\varphi)=(rv-\Phi(\cdot,v),\varphi)_{L^2_{\varrho}(D)}.
\end{equation}
\end{definition}
Recall that by Alexandrov's theorem (see, e.g., \cite[Theorem
7.10]{AA}), $\psi$  has  second order derivatives at $x$ for a.e.
$x\in\BR^d$. Consequently,  $\LL_{BS}\psi$  appearing in the definition of
$\Phi$ (see (\ref{eq2.11}))  is well defined for a.e. $x\in\BR^d$.
Of course, in general, $\psi$ does not have second
order derivatives in the distribution sense given by locally
integrable functions.

Below we show that under (A1), (A2) and (\ref{eq3.42}) a weak
solution of (\ref{eq3.32}) really exists, and in fact has second
order  derivatives in distribution sense given by locally
integrable functions. Note that by Remark \ref{rem3.1}, if
$\delta_i>0$, $i=1,\dots,d$, then (A1) and (3.5) imply (A2).

\begin{proposition}
\label{prop3.11} Assume that \mbox{\rm(A1), (A2)} and
\mbox{\rm(\ref{eq3.42})}  are satisfied. If $v$ is a variational
solution of  \mbox{\rm(\ref{eq3.32})}  then $v\in H^2_{loc}(D)$.
In particular,
\begin{equation}
\label{eq3.41}
L_{BS}v(x)=rv(x)-\Phi(x,v(x))\quad \mbox{for a.e. }x\in D.
\end{equation}
\end{proposition}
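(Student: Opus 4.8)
The plan is to show that a variational solution of the obstacle-type semilinear problem (\ref{eq3.32}) is automatically in $H^2_{loc}(D)$ by an elliptic-regularity bootstrap, exploiting the fact that on the set $\{v>\psi\}$ the equation reduces to a linear nondegenerate elliptic equation with a right-hand side that is already controlled. First I would localize: fix $x_0\in D$ and a ball $B=B(x_0,\rho)$ with $\bar B\subset D$. On $B$ the operator $L_{BS}$ has the form $\sum a_{ij}x_ix_j\partial^2_{x_ix_j}/2+\sum(r-\delta_i)x_i\partial_{x_i}$ with coefficients that are smooth and, because $\bar B\subset D$ means $x_i$ is bounded away from $0$ on $B$, the principal part $\frac12 a_{ij}x_ix_j$ is uniformly elliptic on $B$ (here one uses that $a$ is strictly positive definite). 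Thus on $B$ we are dealing with a uniformly elliptic operator in divergence-adjacent form, and the weighted space $H^1_\varrho(D)$ restricted to $B$ coincides (with equivalent norm) with the ordinary $H^1(B)$, since $\varrho$ is bounded above and below on $\bar B$ and $x_i$ is bounded above and below there.

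The key structural observation is that the right-hand side $rv-\Phi(\cdot,v)$ of (\ref{eq3.41}) is in $L^2_{loc}(D)$: indeed $v\in L^2_{\varrho}(D)\subset L^2_{loc}(D)$ by definition of variational solution, and $\Phi(\cdot,v)\in L^2_{\varrho}(D)$ is part of the definition, hence also $\Phi(\cdot,v)\in L^2_{loc}(D)$. So, rewriting (\ref{eq3.33}) test-function by test-function, $v$ is a weak $H^1(B)$ solution of a uniformly elliptic equation $L_{BS}v=g$ on $B$ with $g:=rv-\Phi(\cdot,v)\in L^2(B)$. By the interior $L^2$-regularity theory for uniformly elliptic equations with smooth coefficients (the Calderón–Zygmund/$W^{2,2}$ estimate, e.g. Gilbarg–Trudinger Theorem 9.11, or the corresponding result for operators in the form considered), it follows that $v\in H^2(B')$ for every $B'\Subset B$, with the quantitative bound $\|v\|_{H^2(B')}\le c(\|g\|_{L^2(B)}+\|v\|_{L^2(B)})$. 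Since $x_0\in D$ was arbitrary, $v\in H^2_{loc}(D)$. Once $v\in H^2_{loc}(D)$, the distributional identity (\ref{eq3.33}) can be integrated by parts back to the strong form: the weak formulation with the adjoint weighting exactly encodes $\int_D(L_{BS}v)\varphi\,\varrho^2\,dx=\int_D(rv-\Phi(\cdot,v))\varphi\,\varrho^2\,dx$ for all $\varphi\in C_0^\infty(D)$, and since both sides are now genuine $L^1_{loc}$ functions against $\varphi$, the fundamental lemma of the calculus of variations gives (\ref{eq3.41}) pointwise a.e.\ on $D$.

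The main obstacle, and the place where care is needed, is matching the weighted weak formulation (\ref{eq3.33}), which carries the adjoint form $B^{BS}_\varrho$ with the weight $\varrho^2$ absorbed into the divergence, against the standard unweighted interior elliptic estimate. Concretely one must check that on a fixed ball $B\Subset D$ the bilinear form $B^{BS}_\varrho(v,\cdot)$ restricted to test functions supported in $B$ agrees, after dividing through by the smooth nonvanishing factor $\varrho^2 x_i x_j$, with the weak form of an honest uniformly elliptic operator with $C^\infty(\bar B)$ coefficients; the weight and the monomials $x_ix_j$ contribute only lower-order terms and bounded multiplicative factors on $B$, so they do not affect ellipticity or the applicability of the $W^{2,2}$ estimate. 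Apart from this bookkeeping the argument is routine: localization, reduction to an $L^2$ right-hand side using the definition of variational solution, interior elliptic regularity, and a final integration by parts to pass from the weak identity to the a.e.\ pointwise identity (\ref{eq3.41}).
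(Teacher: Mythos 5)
Your argument is correct and follows essentially the same route as the paper: localize to a set compactly contained in $D$, strip the weight $\varrho^2$ (the paper does this by testing against $\varphi=\xi/\varrho^2$), note that $rv-\Phi(\cdot,v)\in L^2_{loc}(D)$ by the definition of variational solution, and invoke interior $H^2$ elliptic regularity before integrating back to the a.e.\ identity. The only cosmetic difference is that the paper first performs the logarithmic change of variables $\tilde v(x)=v(e^x)$ so as to apply the regularity theorem (Evans, Section 6.3, Theorem 1) to the constant-coefficient operator $\tilde L$, whereas you apply the variable-coefficient interior estimate directly on a ball where $a_{ij}x_ix_j$ is uniformly elliptic; both are standard.
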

\begin{proof}
Fix a bounded open set $U$ such that $U\subset\bar U\subset D$.
Let $\xi\in C_0^{\infty}(U)$ and $\varphi=\xi/\varrho^2$.
Then $\varphi\in H^1_{\varrho}$, so from (\ref{eq3.33})
it follows that
\[
\BB^{BS}(v,\xi)=(rv-\Phi(\cdot,v),\xi)_{L^2(\BR^d;dx)},
\]
where $\BB^{BS}$ is defined as $B^{BS}_{\varrho}$ but with $\varrho=1$. Therefore $v$ is a weak solution, in the space $H^1(U)$,  of the problem $L_{BS}v=rv-\Phi(\cdot,v)$ in $U$.
To show that $v\in H^2_{loc}(D)$ we make a well known change of variables, which reduces  the study of (\ref{eq3.41}) to the study of an equation with  uniformly elliptic operator $\tilde L$ defined as
\[
\tilde L=\sum^d_{i=1}(r-\delta_i-a_{ii}/2)\partial_{x_i} +\frac12\sum^d_{i,j=1}a_{ij}\partial^2_{x_ix_j}.
\]
More precisely, write $e^x=(e^{x_1},\dots,e^{x_d})$ for  $x=(x_1,\dots,x_d)\in\BR^d$, and then define $\tilde v(x)=v(e^x)$, $\tilde \Phi(x)=\Phi(e^x,\tilde v(x))$ and  $\tilde U=\{x\in\BR^d:e^x\in U\}$.
An elementary computation shows that  $\tilde v\in H^1(\tilde U)$  and  $\tilde v$ is a weak solution of the problem
$\tilde L\tilde v=r\tilde v-\tilde\Phi$ in $\tilde U$. By \cite[Theorem 1, Section 6.3]{E}, $\tilde v\in H^2(\tilde U)$, from which it follows that $v\in H^2(U)$. Because of arbitrariness of $U$, $v\in H^2_{loc}(D)$. The equality (\ref{eq3.41}) now follows by a standard argument (see Remark (ii) following \cite[Section 6.3, Theorem 1]{E}).
\end{proof}

\begin{theorem}
\label{th3.13}
Assume that \mbox{\rm(A1), (A2)} and \mbox{\rm(\ref{eq3.42})} are satisfied. Then $V$ is a variational solution of \mbox{\rm(\ref{eq3.32})}.
\end{theorem}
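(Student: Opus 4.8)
The plan is to show that $V$ (which we write as $V(x)=V(0,x)$, extended to $D$) satisfies the variational identity \eqref{eq3.33}, using the stochastic representation established in Section \ref{sec3.1}, in particular Corollary \ref{cor3.7}(i) which gives $V(x)=E_x\bar Y^x_0$ and $e^{rt}\bar Y^x_t=V(X_t)$, together with the early exercise premium formula \eqref{eq3.20} and the representation of $\bar K$ from Remark \ref{rem3.8}(ii). The natural route is via the known finite-horizon result: each $V_T=u_T$ is a viscosity (and, under \eqref{eq3.42}, a variational) solution of the obstacle problem \eqref{eq5.3}, and $V_T\to V$ locally; one would like to pass to the limit in the variational formulation.

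First I would record that $V\in H^1_\varrho(D)$. By Lemma \ref{lem3.8}(i), $V$ is Lipschitz on $D$ with constant $L$, so $V(x)\le C(1+|x|)$ and hence $V\in L^2_\varrho(D)$; the Lipschitz bound also controls the weighted gradient, giving $\sum_j\sigma_{ij}x_i V_{x_j}\in L^2_\varrho$ after the elementary weight estimates already noted, so $V\in H^1_\varrho(D)$. Likewise $V\ge\psi$ on $D$ (immediate from \eqref{eq6.7} by taking $\tau=s$), and $\Phi(\cdot,V)\in L^2_\varrho(D)$ because $0\le\Phi(\cdot,V)\le\Psi^-$ pointwise and $\Psi^-$ satisfies \eqref{eq3.42}, so $\Psi^-\in L^2_\varrho(D)$. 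Thus all the structural requirements of the definition of a variational solution hold, and only the weak equation \eqref{eq3.33} remains.

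For \eqref{eq3.33} I would argue probabilistically. Fix $\varphi\in C_0^\infty(D)$ supported in a bounded open $U$ with $\bar U\subset D$; by density it suffices to treat such $\varphi$. The identity $e^{rt}\bar Y^x_t=V(X_t)$ and the BSDE \eqref{eq3.5}, rewritten via \eqref{eq6.6} and Remark \ref{rem3.8}(ii), say that under $P_x$ the process $V(X_t)$ has the semimartingale decomposition
\[
V(X_t)=V(X_0)-\int_0^t\bigl(rV(X_s)-\Psi^-(X_s)\fch_{\{V(X_s)=\psi(X_s),\,\psi(X_s)>0\}}\bigr)\,ds+\text{(martingale)},
\]
i.e. $V$ solves $L_{BS}V=rV-\Phi(\cdot,V)$ in the sense of the martingale problem for the diffusion $X$ killed at the exit time of $U$. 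Standard results identifying the generator with $L_{BS}$ on test functions (Itô's formula / integration by parts against $\varphi\varrho^2$, or the potential-theoretic fact that a function whose composition with $X$ has drift $-(rV-\Phi(\cdot,V))$ is a weak solution of the corresponding PDE) then yield $B^{BS}_\varrho(V,\varphi)=(rV-\Phi(\cdot,V),\varphi)_{L^2_\varrho(D)}$ for all $\varphi\in C_0^\infty(D)$, and hence, by continuity of both sides in $\varphi$ with respect to the $H^1_\varrho(D)$ norm, for all $\varphi\in H^1_\varrho(D)$. An equivalent and perhaps cleaner execution: pass to the limit $T\to\infty$ in the variational formulation satisfied by $V_T$ on $[0,T]\times U$, using $V_T\to V$ locally uniformly (Corollary \ref{cor3.7}(ii)) together with the uniform Lipschitz bound of Lemma \ref{lem3.8}(i) to extract weak $H^1_\varrho$-convergence of the spatial gradients, and using $\Phi(X_s,e^{rs}\bar Y^s_s)\to\Phi$-limit with the domination by $\Psi^-$ to pass to the limit in the zero-order term.

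The main obstacle is the justification of passing from the probabilistic (martingale-problem) decomposition to the weak PDE identity \eqref{eq3.33} with the correct weighted bilinear form $B^{BS}_\varrho$: one must integrate by parts correctly to move all derivatives onto $\varphi\varrho^2$, handle the $x_ix_j$ coefficients that grow at infinity (harmless here since $\varphi$ has compact support in $D$), and make sure the localization of the martingale is compatible with the identification. I expect this to go through by the same argument already used for $V_T$ in \cite{KR:AMO} and \cite{EKPPQ}: once one knows $V\in H^1_\varrho(D)$ and that $V(X)$ has the stated drift, the identification of $V$ as a weak solution is the standard converse to Itô's formula, so the real content is the bookkeeping with the weight $\varrho$ and the restriction to test functions supported away from $\partial D=\{x_i=0\}$.
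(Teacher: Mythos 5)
Your second, ``equivalent and perhaps cleaner'' route is in fact the route the paper takes: it starts from the weak formulation of the Cauchy problem (\ref{eq3.39}) satisfied by $V_T$ (quoted from \cite{KR:AMO}), tests it against a time-independent $\varphi\in C_0^{\infty}(D)$ over the fixed window $t\in[0,1]$, and passes to the limit $T\to\infty$ term by term using exactly the ingredients you list (the uniform Lipschitz bound of Lemma \ref{lem3.8} to get $V_T\to V$ weakly in $L^2(0,1;H^1(U))$, and monotonicity $V_T\nearrow V$ plus domination by $\Psi^-$ for the zero-order term). Your preliminary verifications that $V\in H^1_{\varrho}(D)$, $V\ge\psi$ and $\Phi(\cdot,V)\in L^2_{\varrho}(D)$ are correct and worth making explicit.

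As written, however, neither of your two routes closes. In the second route you never say what happens to the time-derivative term $\int\langle\partial_tV_T(t),\eta(t)\rangle\,dt$: this is the one non-routine point, and the paper dispatches it by observing that for time-independent $\varphi$ one has $\int_0^1\langle\partial_tV_T(t),\varphi\rangle\,dt=(V_T(1)-V_T(0),\varphi)_{L^2_{\varrho}}$, which tends to $(V-V,\varphi)=0$ because $V_T(0,\cdot)$ and $V_T(1,\cdot)$ converge to the \emph{same} limit $V$ by Corollary \ref{cor3.7}(ii); without an argument of this kind you cannot pass from the parabolic identity to the elliptic one (\ref{eq3.33}). In your primary (probabilistic) route, the step you yourself flag as ``the main obstacle'' --- going from the semimartingale decomposition of $V(X_t)$ to the weak identity (\ref{eq3.33}) --- is a genuine gap rather than bookkeeping: knowing that $V$ is Lipschitz and that $V(X_t)-\int_0^t(rV-\Phi(\cdot,V))(X_s)\,ds$ is a local martingale does not by itself identify $V$ as an $H^1_{\varrho}$ weak solution; one needs a Fukushima-type (Dirichlet form) decomposition or a mollification argument, neither of which is supplied, and the paper deliberately avoids this by working only with the already-established weak formulation for $V_T$. (A minor slip: the sign in your displayed decomposition is off --- the drift of $V(X_t)$ must be $+(rV-\Phi(\cdot,V))(X_s)\,ds$ if $V$ is to solve $L_{BS}V=rV-\Phi(\cdot,V)$.)
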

\begin{proof}
Let $W_{\varrho}=\{u\in L^2(0,T;H^1_{\varrho}):u_t\in
L^2(0,T;H^{-1}_{\varrho})\}$. In  \cite{KR:AMO} it is proved that for every $T>0$,  $V_T\in W_{\varrho}$ and $V_T$  is a variational  solution of the Cauchy problem
\begin{equation}
\label{eq3.39}
\partial_tV_T+L_{BS}V_T=rV_T-\Phi(\cdot,V_T),\qquad V_T(T,\cdot)=\psi,
\end{equation}
i.e. $V_T\ge\psi$ and (\ref{eq3.39}) is satisfied in the weak sense. In particular,
for any test function $\eta\in C^{\infty}_0((0,T)\times D)$ we have
\[
\int^T_0\langle\partial_tV_T(t),\eta(t)\rangle\,dt +\int^T_0B^{BS}_{\varrho}(V_T(t),\eta(t))\,dt =\int^T_0(rV_T(t)-\Phi(\cdot,V_T(t)),\eta(t))_{L^2_{\varrho}}\,dt,
\]
where $V_T(t)=V_T(t,\cdot)$, $\eta(t)=\eta(t,\cdot)$ and $\langle\cdot,\cdot\rangle$ denotes the duality pairing between $L^2(0,T;H^{-1}_{\varrho})$ and  $L^2(0,T;H^1_{\varrho})$. From this one can deduce  that for every $\varphi\in C^{\infty}_0(D)$,
\begin{align*}
&\int^1_0\!\int_{D}\partial_tV_T(t,x)\varphi\varrho^2(x)\,dt\,dx
+\sum^d_{i=1}\int^1_0\!\int_{D}
(r-\delta_i)x_i\partial_{x_i}V_T(t,x)\varphi(x)\varrho^2(x)\,dt\,dx\\
&\qquad\qquad\qquad-\frac12\sum^d_{i,j=1}\int^1_0\!\int_{D}
a_{ij}\partial_{x_i}V_T(t,x)\partial_{x_j}(x_ix_j\varphi(x)\varrho^2(x))\,dt\,dx\\
&\qquad=\int^1_0\!\int_{D}(rV_T(t,x)-\Phi(x,V_T(t,x))
\varphi(x)\varrho^2(t,x)\,dt\,dx,
\end{align*}
that is
\begin{equation}
\label{eq3.34}
\int^1_0\langle\partial_tV_T(t),\varphi\rangle\,dt +\int^1_0B^{BS}_{\varrho}(V_T(t),\varphi)\,dt =\int^1_0(rV_T(t)-\Phi(\cdot,V_T(t)),\varphi)_{L^2_{\varrho}}\,dt.
\end{equation}
By Corollary \ref{cor3.7}(ii), for every $x\in D$, $V_T(0,x)\rightarrow V(x)$ and  $V_T(1,x)\rightarrow V(x)$. Furthermore, by Lemma 3.10(ii), $|(V_T(1,\cdot)-V(0,\cdot))\varphi|\varrho^2$ is bounded by the function $x\mapsto
2C(1+|x|)|\varphi(x)|\varrho^2(x)$,  which is integrable on $D$ since $\varphi\in C^{\infty}_0(D)$. Therefore applying the dominated convergence theorem we get
\[
\lim_{T\rightarrow\infty}\int^T_0(\partial_tV_T(t),\varphi)_{L^2_{\varrho}}\,dt
=\lim_{T\rightarrow\infty}\int_{D}(V_T(1,x)-V_T(0,x))\varphi(x)\varrho^2(x)\,dx=0.
\]
Suppose that $\mbox{supp}[\varphi]\subset U$ for some relatively compact open set $U\subset D$. By Lemma \ref{lem3.8}, $|\partial_{x_i}V_T|\le L$ a.e. for all $i=1,\dots,d$ and $T>0$, and $V_T$ are bounded on $(0,1)\times U$ uniformly in $T>0$. By this and Corollary \ref{cor3.7}(ii), $V_T\rightarrow V$ weakly in $L^2(0,1;H^1(U))$. Therefore, for $i=1,\dots,d$, we have
\begin{align*}
&\lim_{T\rightarrow\infty}
\int^1_0\int_{D}(r-\delta_i)x_i\partial_{x_i}V_T(t,x)\varphi(x)\varrho^2(x)\,dt\,dx\\
&\qquad
=\int_{D}(r-\delta_i)x_i\partial_{x_i}V(x)\varphi(x)\varrho^2(x)\,dx\\
\end{align*}
and
\begin{align*}
&\lim_{t\rightarrow\infty}\sum^d_{j=1}\int^1_0\int_{D}a_{ij}x_i\partial_{x_i}V_T(t,x)
\partial_{x_i}(x_ix_j\varphi(x)\varrho^2(x))\,dt\,dx\\
&\qquad=\sum^d_{j=1}\int_{D}a_{ij}x_i\partial_{x_i}V(x)
\partial_{x_i}(x_ix_j\varphi(x)\varrho^2(x))\, dx.
\end{align*}
Hence
\begin{equation}
\label{eq3.36}
\lim_{T\rightarrow\infty}\int^1_0B^{BS}_{\varrho}(V_T(t),\varphi)\,dt =B^{BS}_{\varrho}(V,\varphi).
\end{equation}
Since $V_T\le V_{T'}$ if $T\le T'$, in fact $V_T\nearrow V$  as $T\rightarrow\infty$. Therefore $\Phi(\cdot,V_T)\rightarrow\Phi(\cdot,V)$ pointwise. Furthermore,
by Lemma 3.10(ii), $V_T(t,x)\le C(1+|x|)$, and by the definition of $\Psi$ we have $\Phi(x,V_T(t,x))\le\Psi^{-}(x)$, $(t,x)\in(0,1)\times D$. Hence $|(rV_T-\Phi(\cdot,V_T))\varphi|\varrho^2$ is bounded by the function $x\mapsto (rC(1+|x|)+\Psi^{-}(x))|\varphi(x)|\varrho^2(x)$, which is integrable on $(0,1)\times D$ by (\ref{eq3.35})  and the fact that $\varphi\in C^{\infty}_0(D)$. Therefore applying the  dominated convergence theorem we get
\begin{align*}
&\lim_{T\rightarrow\infty}
\int^1_0\int_{D}(rV_T(t,x)-\Phi(x,V_T(t,x))\varphi(x)\varrho^2(x)\,dt\,dx\\
&\qquad=\int_{D}(rV(x)-\Phi(x,V(x)))\varphi(x)\varrho^2(x)\,dx,
\end{align*}
i.e.
\begin{equation}
\label{eq3.37}
\lim_{T\rightarrow\infty}\int^1_0(rV_T(t)-\Phi(\cdot,V_T(t)),\varphi)_{L^2_{\varrho}}\,dt
=(rV-\Phi(\cdot,V),\varphi)_{L^2_{\varrho}}.
\end{equation}
From (\ref{eq3.34})--(\ref{eq3.37}) it follows that  $V$ satisfies (\ref{eq3.33}) for $\varphi\in C^{\infty}_0(D)$, and hence for  $\varphi\in H^1_{\varrho}$ by
an approximation argument. Clearly $V\ge\psi$, so $V$ is a solution of (\ref{eq3.32}).
\end{proof}

Before stating the uniqueness result, we note that  under the assumptions on $\psi$ and $\delta_1,\dots, \delta_d$ stated in Remark \ref{rem3.1}(ii), $e^{-rt}P_tV(x)\rightarrow0$ as $\rightarrow\infty$.
Therefore it is natural to prove uniqueness in the class of functions having the same property.

\begin{proposition}
Under the assumptions of Theorem \ref{th3.13}  there is at most one variational solution $v$ of \mbox{\rm(\ref{eq3.43})}.
\end{proposition}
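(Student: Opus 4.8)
The plan is to show that any variational solution $v$ of \eqref{eq3.32} satisfying the decay condition gives, through $\bar Y_t:=e^{-rt}v(X_t)$, a solution of the BSDE \eqref{eq3.5}, and then to invoke the uniqueness part of Proposition \ref{prop2.1} together with Corollary \ref{cor3.7}(i). Fix $x\in D$ and let $v$ be such a solution. By Proposition \ref{prop3.11}, $v\in H^2_{loc}(D)$ and $L_{BS}v=rv-\Phi(\cdot,v)$ a.e. in $D$; since $\Phi(\cdot,v)\le\Psi^-$ is locally bounded by \eqref{eq3.42}, a standard elliptic bootstrap (performed, as in the proof of Proposition \ref{prop3.11}, in logarithmic coordinates) gives $v\in W^{2,p}_{loc}(D)$ for every $p<\infty$, in particular $v\in C^1_{loc}(D)$. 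As $a$ is strictly positive definite, $L_{BS}$ is locally uniformly elliptic on $D$, so the It\^o--Krylov formula applies to $v(X_\cdot)$ on each relatively compact $D_n\uparrow D$, and integration by parts then yields, $P_x$-a.s.,
\[
\bar Y_t=v(x)-\int_0^te^{-rs}\Phi(X_s,v(X_s))\,ds+\bar M_t,\qquad t\ge0,
\]
where $\bar M$ is a continuous local martingale with $\bar M_0=0$ (localized by the exit times $\tau_n$ from the $D_n$, on which $\nabla v$ is bounded). Since $e^{rs}\bar Y_s=v(X_s)$, the drift is $-e^{-rs}\Phi(X_s,e^{rs}\bar Y_s)$; put $\bar K_t=\int_0^te^{-rs}\Phi(X_s,v(X_s))\,ds$.

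Several of the required properties are immediate. As $v\ge\psi\ge0$ and $\Phi\ge0$, $\bar Y$ is a nonnegative local supermartingale, hence a supermartingale; thus $\bar Y_\infty:=\lim_{t\to\infty}\bar Y_t$ exists $P_x$-a.s. and is nonnegative, while Fatou's lemma gives $E_x\bar Y_\infty\le\liminf_{t\to\infty}E_x\bar Y_t=\liminf_{t\to\infty}e^{-rt}P_tv(x)=0$, so $\bar Y_\infty=0$ $P_x$-a.s. Moreover $\bar Y_t\ge\bar L_t:=e^{-rt}\psi(X_t)$, and, because $\Phi(\cdot,y)$ vanishes whenever $y>\psi(\cdot)$ (see \eqref{eq2.9}) while $v\ge\psi$, the increasing process $\bar K$ grows only on $\{v(X_s)=\psi(X_s)\}=\{\bar Y_s=\bar L_s\}$, which is the minimality condition of \eqref{eq6.1}. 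Finally $E_x\bar K_\infty\le E_x\int_0^\infty e^{-rs}\Psi^-(X_s)\,ds<\infty$ by \eqref{eq3.13}(b). Consequently, once $\bar Y$ is known to be of class (D), the pair $(\bar Y,\bar M)$ solves \eqref{eq3.5} (and $(\bar Y,\bar K,\bar M)$ solves \eqref{eq6.1}); Proposition \ref{prop2.1} then forces $(\bar Y,\bar M)=(\bar Y^x,\bar M^x)$, and Corollary \ref{cor3.7}(i) gives $e^{rt}\bar Y^x_t=V(X_t)$, so at $t=0$, $v(x)=\bar Y_0=\bar Y^x_0=V(x)$. As $x\in D$ was arbitrary, $v=V$, which proves uniqueness.

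It remains to check that $\bar Y$ is of class (D). Taking conditional expectations in the decomposition stopped at $\tau_n$ (where $\bar M^{\tau_n}$ is a martingale) gives, for $0\le t\le T$,
\[
\bar Y_{t\wedge\tau_n}=E_x\Big(\bar Y_{T\wedge\tau_n}+\int_{t\wedge\tau_n}^{T\wedge\tau_n}e^{-rs}\Phi(X_s,v(X_s))\,ds\,\Big|\,\FF_t\Big).
\]
Letting $n\to\infty$ --- using $\bar Y_{T\wedge\tau_n}\ge0$, the bound $E_x\bar K_\infty<\infty$, and a uniform-integrability estimate on $\bar M_{T\wedge\tau_n}$, i.e. the fact that $\bar M$ is a genuine martingale --- and then $T\to\infty$, using $\bar Y_T\to0$ and conditional monotone convergence for $\int_t^T\to\int_t^\infty$, one obtains
\[
\bar Y_t=E_x\Big(\int_t^\infty e^{-rs}\Phi(X_s,v(X_s))\,ds\,\Big|\,\FF_t\Big)=E_x\big(\bar K_\infty\,\big|\,\FF_t\big)-\bar K_t.
\]
Since $\{E_x(\bar K_\infty\mid\FF_\sigma):\sigma\in\TT\}$ is uniformly integrable and $0\le\bar K_\sigma\le\bar K_\infty\in L^1(P_x)$, the family $\{\bar Y_\sigma:\sigma\in\TT\}$ is uniformly integrable, i.e. $\bar Y$ is of class (D).

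The main obstacle is precisely this last step: upgrading the local martingale $\bar M$ from the It\^o--Krylov decomposition to a uniformly integrable martingale, equivalently justifying the passage to the limit over the localizing times. This is where the structural hypotheses enter: the nondegeneracy of $a$ together with $v\in H^1_\varrho$ control $E_x\langle\bar M\rangle_T\le C\,E_x\int_0^T|X_s|^2\,|\nabla v(X_s)|^2\,ds$ (via occupation-time / Green-function estimates for $X$), the growth bound \eqref{eq3.42} and \eqref{eq3.13}(b) bound $\bar K_\infty$ in $L^1(P_x)$, and the decay assumption $e^{-rt}P_tv(x)\to0$ kills the terminal term at infinity. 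Everything else --- the supermartingale property, $\bar Y_\infty=0$, the minimality condition and the final identification $v=V$ --- is then routine.
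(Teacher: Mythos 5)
Your strategy is genuinely different from the paper's. You take a \emph{single} variational solution $v$, turn $\bar Y_t=e^{-rt}v(X_t)$ into a solution of the BSDE (\ref{eq3.5}), and then invoke the uniqueness of Proposition \ref{prop2.1} together with Corollary \ref{cor3.7}(i) to identify $v$ with $V$. The paper instead takes \emph{two} variational solutions $v^1,v^2$, applies the It\^o--Krylov formula to the difference $\bar Y=e^{-rt}(v^1-v^2)(X_t)$, and runs the Meyer--Tanaka comparison from the proof of Proposition \ref{prop2.1} directly: the monotonicity (\ref{eq3.21}) of $\Phi$ kills the drift, giving $E_x\bar Y^{\pm}_0\le E_x\bar Y^{\pm}_t\le e^{-rt}P_t|v^1-v^2|(x)\to0$. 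The point of that route is that it never needs to verify that $e^{-rt}v(X_t)$ is of class (D) or that its martingale part is a true martingale --- precisely the properties your route cannot avoid, because they are built into the definition of a solution of (\ref{eq3.5}) and hence into the hypotheses of Proposition \ref{prop2.1}.

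And that is where your argument has a genuine gap, which you yourself flag. From the It\^o--Krylov decomposition you only get that $N_t:=\bar Y_t+\bar K_t=v(x)+\bar M_t$ is a \emph{nonnegative local martingale}, hence a supermartingale; this yields $\sup_{\sigma\in\TT}E_x\bar Y_\sigma\le v(x)$ and the lower bound $\bar Y_t\ge E_x(\bar K_\infty-\bar K_t\mid\FF_t)$, but not the upper bound $\bar Y_t\le E_x(\bar K_\infty\mid\FF_t)-\bar K_t$ that you need for class (D). Upgrading $N$ to a true martingale is equivalent to the identity $e^{-rt}P_tv(x)+E_x\bar K_t=v(x)$ for all $t$, i.e.\ to the Feynman--Kac representation of $v$ that the whole argument is trying to establish --- so one cannot simply assume it. The justification you sketch (controlling $E_x\langle\bar M\rangle_T$ via occupation-time or Green-function estimates from $v\in H^1_\varrho$ and the nondegeneracy of $a$) is not developed anywhere in the paper and is not obviously available: $v\in H^1_\varrho$ controls $\sigma^*x\nabla v$ in $L^2(\varrho^2\,dx)$, but converting this into $E_x\int_0^Te^{-2rs}|X_s|^2|\nabla v(X_s)|^2\,ds<\infty$ requires the occupation measure of $X$ under $P_x$ to have a density comparable to $\varrho^2\,dx$, which is a nontrivial claim about the (possibly degenerate-at-the-boundary, possibly recurrent) log-normal diffusion. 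Everything else in your write-up --- the $W^{2,p}_{loc}$ bootstrap, the supermartingale argument giving $\bar Y_\infty=0$ from the decay hypothesis, the minimality condition, the $L^1$ bound on $\bar K_\infty$ from (\ref{eq3.13})(b), and the final identification via Proposition \ref{prop2.1} and Corollary \ref{cor3.7} --- is sound; but as it stands the class-(D) step is a hole, and the cleanest repair is the paper's own device of comparing two solutions so that only the one-sided inequality $E_x\bar Y^{\pm}_0\le E_x\bar Y^{\pm}_t$ (obtained by localization) is ever needed.
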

\begin{proof}
Let $v^1,v^2$ be two solutions of (\ref{eq3.43}), and let $v=v^1-v^2$. Define $\tilde L$ as in the proof of Proposition \ref{prop3.11} and set $\tilde v(x)=v(e^x)$. Then
$v(X)=\tilde v(Z)$, where $Z=(Z^1,\dots,Z^d)$,
$Z^i_t=\ln x_i+(r-\delta_i-a_{ii}/2)t+\sum^d_{j=1}\sigma_{ij}B^j_{0,t}$, $t\ge0$.
Choose an increasing  sequence  $\{U_n\}$ of bounded open sets such that $\bar U_n\subset U_{n+1}$ and $\bigcup_{n\ge1}U_n=D$ and set  $\tau_n=\inf\{t>0:X_t\notin  U_n\}=\inf\{t>0:Z_t\notin \tilde U_n\}$, where $\tilde U_n=\{x\in\BR^d:e^x\in U\}$. Since $\tilde v\in H^2(\tilde U_n)$, by  the extension of It\^o's formula proved by Krylov (see \cite[Chapter II, \S10, Theorem 1]{Kr}) we have
\[
\tilde v(Z_{t\wedge\tau_n})=\tilde v(Z_0)+\sum^d_{i,j=1}\int^{t\wedge\tau_n}_0\partial_{x_i}\tilde v(Z_s)\,\sigma_{ij}\,dB^j_{0,s}
+\int^{t\wedge\tau_n}_0\tilde L\tilde v(Z_s)\,ds,\quad t\ge0.
\]
Define $Y_t=v(X_t)$, $t\ge0$. Since $v(X)=\tilde v(Z)$, it follows that
\begin{equation}
\label{eq3.38}
Y_{t\wedge\tau_n}=Y_0+\sum^{d}_{i,j=1}\int^{t\wedge\tau_n}_0L_{BS}v(X_s)\,ds +R_{t\wedge\tau_n},\quad t\ge0,
\end{equation}
where $R_t=\sum^d_{i,j=1}\int^t_0\sigma_{ij}X^i_s\partial_{x_i}v(X_s)\,dB^j_{0,s}$.
Since $P_x(X_t\in D,t\ge0)=1$, $\tau_n\rightarrow\infty$ $P_x$-a.s. as $n\rightarrow\infty$. Therefore  letting $n\rightarrow\infty$ in (\ref{eq3.38}) shows that it  holds true with $t\wedge\tau_n$ replaced by $t$. Let $\bar Y_t=e^{-rt}Y_t$. Integrating by parts we obtain
\begin{align*}
\bar Y_t&=\bar Y_0+\int^t_0(-re^{-rs}Y_s\,ds+\int^t_0e^{-rs}\,dY_s\\
&=\bar Y_0+\int^t_0e^{-rs}(-rv+L_{BS}v)(X_s)\,ds+\int^t_0e^{-rs}\,dR_s\\
&=\bar Y_0-\int^t_0e^{-rs}(\Phi(X_s,v^1(X_s))-\Phi(X_s,v^2(X_s))\,ds+\int^t_0e^{-rs}\,dR_s.
\end{align*}
Repeating now the argument from the proof of  Proposition \ref{prop2.1} we show that
$E_x\bar Y^+_0\le E_x\bar Y^+_t$, $t\ge0$. In much the same way we show that $E_x\bar Y^-_0\le E_x\bar Y^-_t$, $t\ge0$. Hence $E_x|\bar Y_0|\le E_x|\bar Y_t|=e^{-rt}E_x|v(X_t)|=e^{-rt}P_t|v|(x)$, which converges to zero as $t\rightarrow\infty$. Thus $|v(x)|=E_x\bar Y_0=0$.
\end{proof}

In the case of American  call  and American put on single asset
explicit formulas for the solution of (\ref{eq3.43}) are known
(see, e.g., \cite{J,KS,McK,S}).  Assume  that $d=1$ and
$\psi=(K-x)^+$, $x\in\BR$.  Then from (\ref{eq2.9}) and
(\ref{eq2.11}) it follows that
\[
\Phi(x,v(x))=\begin{cases}
(rK-\delta x)^+ &\mbox{if }v(x)\le\psi(x), \\
0 &\mbox{if }v(x)>\psi(x).
\end{cases}
\]
Let $v$ be a variational solution of (\ref{eq3.43}). Then
$v\ge\psi$ and $v$ satisfies the equation
\begin{equation}
\label{eq3.46} L_{BS}v=rv+\begin{cases}
-(rK-\delta x)^+ &\mbox{on }\{ v=\psi\}, \\
0 &\mbox{on }\{v>\psi\},
\end{cases}
\end{equation}
in the weak sense (see the definition preceding Proposition
\ref{prop3.11}). Furthermore, by Proposition \ref{prop3.11}, $v\in
H^2_{loc}(D)$ and  (\ref{eq3.46}) is satisfied for a.e.
$x\in(0,\infty)$. In fact much more can be said. McKean \cite{McK}
(see also \cite{J} and \cite[Chapter 2, Theorem 7.2]{KS}) showed that $v$ has
the form
\begin{equation}
\label{eq3.51}
v(x)=\begin{cases}
K-x, & 0\le x\le b,\\
(K-b)(x/b)^{\gamma}, &\quad x> b,
\end{cases}
\end{equation}
where $\gamma=-(1/\sigma)(\nu+\sqrt{\nu^2+2r})$,
$\nu=-(1/2)\sigma+(r-\delta)/\sigma$ and $b=\gamma K/(\gamma-1)$.
In particular, we see that $v$ is $C^1((0,\infty)\cap C^2((0,\infty)\setminus\{b\})$
and $\{v>\psi\}=(b,\infty)$, $\{v=\psi\}=[0,b]$. Furthermore, from (\ref{eq3.51}) it follows that (\ref{eq3.46}) is satisfied for every $x\in(0,b)\cup(b,\infty)$.
For the corresponding formulas for $v$ in the case of American call we refer the reader to
\cite[Theorem 6.7]{KS}. Let $\CC$ denote the continuation region for the stopping problem (\ref{eq1.2}) with $s=0$, that is $\CC=\{(t,x)\in[0,\infty)^2:V_t(0,x)>\psi(x)=(K-x)^+\}$, and let $\CC_t$ be the $t$ section of $\CC$, i.e. $\CC_t=\{x\ge0:(t,x)\in \CC\}$.
In \cite{J} (see also \cite[Section 2.7]{KS}) it is proved that
$\CC_t=(b(t),\infty)$, $t>0$ for some continuous function $b(t)$ called the free boundary for the parabolic obstacle problem (\ref{eq5.3}). Furthermore, the constant $b$ of (\ref{eq3.51}) is the limit, as $t\rightarrow\infty$, of $b(t)$.

\section{Examples}

Below we give examples of payoff functions satisfying (A1), (A2)
and (\ref{eq3.42}). In all the examples $\Psi^{-}$ is computed in
the subset $D\cap\{\psi>0\}$ (see Remark \ref{rem3.8}(ii)).

\begin{example}
\label{ex4.1} Let $d=1$.
\[
\psi(x)=(x-K)^+,\qquad \Psi^{-}(x)=(\delta x-rK)^+\quad
\mbox{(call)}
\]
\[
\psi(x)=(K-x)^+,\qquad \Psi^{-}(x)=(rK-\delta x)^+\quad
\mbox{(put)}
\]
The assumptions (A1) and  (A2) are satisfied if  $r>0$ in case of put option, and if  $r>0,\delta>0$ in case of call option. By (\ref{eq3.40}), for put option we have
\[
V(x)-V_T(0,x)\le e\Big(Ke^{-rT}+rK\int^{\infty}_Te^{-rt}\,dt\Big)=2eKe^{-rT},\quad x>K.
\]
For call option, $V(x)-V_T(0,x)\le 2eKe^{-\delta T}$, $T>0$, $x\in(0,K)$.
\end{example}

\begin{example}
\label{ex4.2} In the examples below $d\ge 2$. In all the cases
where $\psi$ is bounded, (A1) and (A2) are satisfied if $r>0$.  In
the other cases they are satisfied if $r>0$ and $\delta_i>0$,
$i=1,\dots,d$.
\begin{enumerate}[(i)]
\item Index options and spread options.
\[
\psi(x)=\big(\sum_{i=1}^{d} w_{i}x_i-K\big)^{+},\quad
\Psi^-(x)=\big(\sum_{i=1}^{d} w_{i}\delta_{i}x_i-r K\big)^{+}\quad
\mbox{(call)}
\]
\[
\psi(x)=\big(K-\sum_{i=1}^{d} w_{i}x_i\big)^{+},\quad
\Psi^-(x)=\big(r K-\sum_{i=1}^{d} w_{i}\delta_{i}x_i\big)^{+}\quad
\mbox{(put)}
\]
\item Call on max option.
\[
\psi(x)=(\max\{x_1,\dots,x_d\}-K)^{+},\qquad
\Psi^{-}(x)=\big(\sum_{i=1}^d\delta_{i}\mathbf{1}_{B_{i}}(x)x_i-r
K\big)^{+},
\]
where $B_{i}=\{x\in\BR^d: x_{i}>x_{j},\, j\neq i\}$.

\item Put on min option.
\[
\psi(x)=(K-\min\{x_1,\dots,x_d\})^{+},\qquad
\Psi^{-}(x)=\big(r K
-\sum_{i=1}^d\delta_{i}\mathbf{1}_{C_{i}}(x)x_i\big)^{+},
\]
where $C_{i}=\{x\in\BR^d: x_{i}<x_{j},\, j\neq i\}$.

\item Multiple strike options.
\[
\psi(x)=(\max\{x_1-K_{1},\dots, x_d-K_d\})^{+},
\]
\[
\Psi^{-}(x)=\big(\sum_{i=1}^{d} \mathbf{1}_{B_{i}}(x-K)(\delta_{i}x_i-r
K_{i})\big)^{+}\quad \mbox{ with }K=(K_{1},\dots,K_d).
\]
\end{enumerate}
\end{example}

\begin{example}
No explicit solution of (\ref{eq3.32}) seem possible in the
multidimensional cases considered in Example \ref{ex4.2}. Note
however, that (\ref{eq3.20}) gives an integral formula for $V$.
For instance, in case $d=2$ and $\psi(x)=(\max\{x_1,x_2\}-K)^+$
(see Example \ref{ex4.2}(ii)), we have
\begin{align}
\label{eq4.1}
V(x)&=E_x\int^{\infty}_0e^{-rt}\big\{(\delta_1X^1_t-rK)^+\fch_{\{X^1_t>X^2_t\}}
\fch_{\{V(X_t)=X^1_t-K>0\}} \nonumber \\
&\qquad\qquad\qquad
+(\delta_2X^2_t-rK)^+\fch_{\{X^2_t>X^1_t\}}\fch_{\{V(X_t)=X^2_t-K>0\}}\big\}
\,dt.
\end{align}
As in the case of options with finite exercise time (see
\cite[Proposition 2.7]{BD} and the remarks following it), formula
(\ref{eq4.1}) (and similar formulas for other options considered
in Example \ref{ex4.2}) has the potential to be used in a
numerical valuation procedure. However, as remarked in \cite{BD},
its implementation may be a challenge.
\end{example}

\subsection*{Acknowledgements}
{This work was supported by the Polish National Science Centre
under Grant  \\ 2016/23/B/ST1/01543).}

\end{document}